\numberwithin{equation}{section}
\newcommand{\DDD}{D3'}
\newcommand{\bu}{\bar{u}}
\newcommand{\txi}{\tilde{\xi}}
\newcommand{\x}{{\xi}_1} 
\newcommand{\td}{\tilde}
\newcommand{\hv}{\hat{v}}
\newcommand{\curl}{\hbox{ \rm curl }}
\def\txi{{\tilde \xi}}
\def\eps{\varepsilon }
\newcommand\R{\mathbb R}
\def\eps{\varepsilon}
\newcommand\br{\begin{remark}}
\newcommand\er{\end{remark}}
\newcommand\bp{\begin{pmatrix}}
\newcommand\ep{\end{pmatrix}}
\newcommand\be{\begin{equation}}
\newcommand\ee{\end{equation}}
\newcommand\ba{\begin{equation}\begin{aligned}}
\newcommand\ea{\end{aligned}\end{equation}}
\newcommand{\bap}{\begin{app}}
\newcommand{\eap}{\end{app}}
\newcommand{\begs}{\begin{exams}}
\newcommand{\eegs}{\end{exams}}
\newcommand{\beg}{\begin{example}}
\newcommand{\eeg}{\end{exaplem}}
\newcommand{\bpr}{\begin{proposition}}
\newcommand{\epr}{\end{proposition}}
\newcommand{\bt}{\begin{theorem}}
\newcommand{\et}{\end{theorem}}
\newcommand{\bc}{\begin{corollary}}
\newcommand{\ec}{\end{corollary}}
\newcommand{\bl}{\begin{lemma}}
\newcommand{\el}{\end{lemma}}
\newcommand{\bd}{\begin{definition}}
\newcommand{\ed}{\end{definition}}
\newcommand{\brs}{\begin{remarks}}
\newcommand{\ers}{\end{remarks}}
\newtheorem{theo}{Theorem}[section]
\newtheorem{prop}[theo]{Proposition}
\newtheorem{cor}[theo]{Corollary}
\newtheorem{lem}[theo]{Lemma}
\newtheorem{exams}[theo]{Examples}
\numberwithin{equation}{section}
\newcommand{\RR}{{\mathbb R}}
\newcommand{\CC}{{\mathbb C}}
\newcommand{\const}{\text{\rm constant}}
\newtheorem{theorem}{Theorem}[section]
\newtheorem{proposition}[theorem]{Proposition}
\newtheorem{corollary}[theorem]{Corollary}
\newtheorem{lemma}[theorem]{Lemma}
\newtheorem{definition}[theorem]{Definition}
\newtheorem{example}[theorem]{Example}
\newtheorem{remark}[theorem]{Remark}
\newcommand{\RM}{\mathbb{R}}
\title{
Nonlinear stability of periodic traveling wave solutions
of systems of viscous conservation laws in the generic case}
\author{\sc \small
Mathew A. Johnson\thanks{Indiana University, Bloomington, IN 47405;
matjohn@indiana.edu: Research of M.J. was partially supported by an NSF Postdoctoral Fellowship under NSF grant DMS-0902192.}
~~~~~
Kevin Zumbrun\thanks{Indiana University, Bloomington, IN 47405;
kzumbrun@indiana.edu:
Research of K.Z. was partially supported
under NSF grants no. DMS-0300487 and DMS-0801745.
 }}
\begin{document}

\maketitle


\begin{center}
{\bf Keywords}: Periodic traveling waves; Bloch decomposition;
modulated waves.
\end{center}

\begin{center}
{\bf 2000 MR Subject Classification}: 35B35.
\end{center}


\begin{abstract}
Extending previous results of Oh--Zumbrun and Johnson--Zumbrun,
we show that spectral stability implies linearized and
nonlinear stability of spatially periodic traveling-wave
solutions of viscous systems of conservation laws
for systems of generic type, removing
a restrictive assumption
that wave speed be constant
to first order along the manifold of nearby periodic solutions.
\end{abstract}


\bigbreak

\section{Introduction }\label{intro}

Nonclassical viscous conservation laws
arising in multiphase fluid and solid mechanics
exhibit a rich variety of traveling wave phenomena,
including homoclinic (pulse-type) and periodic solutions
along with the standard heteroclinic (shock, or front-type)
solutions \cite{GZ,Z6,OZ1,OZ2}.
Here, we investigate stability of spatially periodic traveling waves:
specifically, sufficient conditions for stability of the wave.

In previous work \cite{OZ4,JZ3}, we showed that {strong spectral stability
in the sense of Schneider \cite{S1,S2,S3}
implies linearized and nonlinear
$L^1 \cap H^K\to L^\infty$ stability} in all dimensions $d\ge 1$.
However, as pointed out in \cite{OZ1,Se1}, the conditions
of Schneider are {\it nongeneric in the conservation law setting},
implying the restrictive condition
that wave speed be {constant to first order} along the
manifold of nearby periodic solutions.
Indeed, it was shown in \cite{OZ2} that failure of this condition
implies a degradation in the decay rates of the Green
function of the linearized equations about the periodic wave,
suggesting that nonlinear stability would be unlikely in the
general (nonstationary wave speed) case in dimension $d=1$.

In this paper, we show that these difficulties are only apparent,
and that, somewhat surprisingly,
spectral stability implies nonlinear stability even if
this additional condition on wave speeds is dropped.
More precisely, we show that small $L^1 \cap H^s$
perturbations of a planar periodic solution $u(x,t)\equiv \bar u(x_1)$
(without loss of generality taken stationary)
converge at Gaussian rate in $L^p$, $p\ge 2$ to a modulation
\be\label{mod}
\bar u(x_1-\psi(x,t))
\ee
of the unperturbed wave,
where $x=(x_1,\tilde x)$, $\tilde x=(x_2, \dots, x_d)$, and
$\psi$ is a scalar function whose $x$- and $t$-gradients
decay at Gaussian rate in all $L^p$, $p\ge 2$,
but which itself decays more slowly by a factor $t^{1/2}$;
in particular, $\psi$ is merely bounded in $L^\infty$ for dimension $d=1$.

In proving this result, we make crucial use of the tools developed
in \cite{OZ4,JZ3}, in particular, a key nonlinear cancellation argument
of \cite{JZ3}.
The key new observation making possible the treatmen
of the generic case is a careful study
of the Bloch perturbation expansion about frequency $\xi=0$,
motivated by relations to the Whitham averaged system
observed in \cite{Se1,OZ3,JZ1,JZB}.

It was shown in \cite {Se1,OZ3}
that the low-frequency dispersion relation near zero of the linearized
operator about a periodic solution $\bar u$ agrees to first order
with that of the linearization about a constant state of the Whitham
averaged system
\ba \label{e:wkb}
\partial_t M + \sum_j \partial_{x_j}F^j &=0,   \\
\partial_t (\Omega N) + \nabla_x  (\Omega S)&=0,
\ea
where $M\in \RR^n$ denotes the average over one period,
$F^j$ the average of an associated flux,
$\Omega=|\nabla_x \Psi|\in \RR^1$ the frequency, $S=-\Psi_t/|\nabla_x \Psi|\in \RR^1$ the speed
$s$, and $N=\nabla_x \Psi/|\nabla_x \Psi|\in \RR^d$ the normal $\nu$ associated
with nearby periodic waves,
with an additional constraint
\begin{equation}\label{con}
\curl (\Omega N)=\curl \nabla_x \Psi \equiv 0.
\end{equation}
As noted in \cite{Se1,OZ3}, this implies both that the eigenvalues
$\lambda_j(\xi)$ bifurcating from $\lambda=0$ at $\xi=0$
are $C^1$ along rays through the origin, and that weak hyperbolicity
(reality of characteristics of \eqref{e:wkb}--\eqref{con}) is
necessary for spectral or linearized stability.

As noted in \cite{JZB}, there is a deeper analogy between
the low-frequency linearized dispersion relation and the Whitham
averaged system at the structural level,
suggesting a useful rescaling of the low-frequency perturbation problem.
It is this intuition that motivates our derivation of sharp low-frequency
estimates crucial to the analysis of nonlinear stability.
With these estimates in place, the rest of the argument goes exactly
as in \cite{JZ3,OZ4}.

\subsection{Equations and assumptions}\label{s:equations}
Consider a parabolic system of conservation laws
\be
u_t + \sum_j f^j(u)_{x_j} = \Delta_x u,
\label{eqn:1conslaw}
\ee
$u \in {\cal U} (\hbox{open}) \in \R^n$,  $f^j \in \R^n$,
$x \in \R^d$, $d\ge 1$, $t \in \R^+$,
and a periodic traveling wave solution
\be
u=\bar{u}(x\cdot \nu -st),
\ee
of period $X$, satisfying the traveling-wave
ODE
$
\bar u''=
( \sum_j \nu_j f^j(\bar u) )'-s\bar u'
$
with boundary conditions $ \bar u(0) = \bar u(X)=:u_0.	$
Integrating, we obtain a first-order profile equation
\be
\bar u'= \sum_j \nu_j f^j(\bar u) -s \bar u -q,
\label{e:profile}
\ee
where $(u_0,q,s,\nu,X)\equiv \const$.
Without loss of generality take $\nu=e_1$, $s=0$,
so that $\bar u=\bar{u}(x_1)$ represents a stationary solution
depending only on $x_1$.

Following \cite{Se1,OZ3,OZ4}, we assume:

(H1) $f^j\in C^{K+1}$,
$K\ge [d/2]+4$.

(H2) The map $H: \,
\R \times {\cal U} \times \R \times S^{d-1} \times \R^n  \rightarrow \R^n$	
taking
$(X; a, s, \nu, q)  \mapsto u(X; a, s, \nu, q)-a$
is full rank at $(\bar{X}; \bar{u}(0), 0, e_1, \bar{q})$,
where $u(\cdot;\cdot)$ is the solution operator of \eqref{e:profile}.

Conditions (H1)--(H2) imply that the set of periodic solutions
in the vicinity of $\bar u$ form a
smooth $(n+d+1)$-dimensional manifold
$\{\bar u^a(x\cdot \nu(a)-\alpha-s(a)t)\}$,
with $\alpha\in \RR$, $a\in \RR^{n+d}$.

\subsubsection{Linearized equations}\label{evans}

Linearizing (\ref{eqn:1conslaw}) about
$\bar{u}(\cdot)$, we obtain
\be
v_t = Lv := \Delta_x v -\sum(A^j v)_{x_j}, \label{e:lin}
\ee
where coefficients
$A^j:= Df^j(\bu)$
are now periodic functions of $x_1$.
Taking the Fourier transform in the transverse coordinate $\td{x}=
(x_2, \cdots, x_d)$, we obtain
\ba
\hv_t = L_{\td{\xi}}\hv
& = \hv_{x_1,x_1}
-(A^1 \hv)_{x_1}
 - i \sum_{j\ne 1}A^j \xi_j \hv
	- \sum_{j\ne 1} \xi_j^2  \hv,
\label{e:fourier}
\ea
where $\td{\xi}=(\xi_2, \cdots, \xi_d)$ is the transverse frequency
vector.

\subsubsection{Bloch--Fourier decomposition
and stability conditions}\label{bloch}

Following \cite{G,S1,S2,S3}, we define the family of operators
\be
L_{\xi} = e^{-i \xi_1 x_1} L_{\txi}  e^{i \xi_1 x_1}
\label{e:part}
\ee
operating on the class of $L^2$ periodic functions on $[0,X]$;
the $(L^2)$ spectrum
of $L_{\txi}$ is equal to the union of the
spectra of all $L_{\xi}$ with $\xi_1$ real with associated
eigenfunctions
\be
w(x_1, \txi,\lambda) := e^{i \xi_1 x_1} q(x_1, \x, \txi, \lambda),
\label{e:efunction}
\ee
where $q$, periodic, is an eigenfunction of $L_{\xi}$.
By continuity of spectrum,
and discreteness of the spectrum of the elliptic operators $L_\xi$ on
the compact domain $[0,X]$,
we have that the spectra of $L_{\xi}$
may be described as the union of countably many continuous
surfaces $\lambda_j(\xi)$.

Without loss of generality taking $X=1$,
recall now the {\it Bloch--Fourier representation}
\be\label{Bloch}
u(x)=
\Big(\frac{1}{2\pi }\Big)^d \int_{-\pi}^{\pi}\int_{\R^{d-1}}
e^{i\xi\cdot x}\hat u(\xi, x_1)
d\xi_1\, d\tilde \xi
\ee
of an $L^2$ function $u$, where
$\hat u(\xi, x_1):=\sum_k e^{2\pi ikx_1}\hat u(\xi_1+ 2\pi k,\tilde \xi)$
are periodic functions of period $X=1$, $\hat u(\tilde \xi)$
denoting with slight abuse of notation the Fourier transform of $u$
in the full variable $x$.
By Parseval's identity, the Bloch--Fourier transform
$u(x)\to \hat u(\xi, x_1)$ is an isometry in $L^2$:
\be\label{iso}
\|u\|_{L^2(x)}=
\|\hat u\|_{L^2(\xi; L^2(x_1))},
\ee
where $L^2(x_1)$ is taken on $[0,1]$ and $L^2(\xi)$
on $[-\pi,\pi]\times \R^{d-1}$.
Moreover, it diagonalizes the periodic-coefficient operator $L$,
yielding the {\it inverse Bloch--Fourier transform representation}
\be\label{IBFT}
e^{Lt}u_0=
\Big(\frac{1}{2\pi }\Big)^d \int_{-\pi}^{\pi}\int_{\R^{d-1}}
e^{i\xi \cdot x}e^{L_\xi t}\hat u_0(\xi, x_1)
d\xi_1\, d\tilde \xi
\ee
relating behavior of the linearized system to
that of the diagonal operators $L_\xi$.

Loosely following \cite{OZ4}, we assume along with (H1)--(H2) the
{\it strong spectral stability} conditions:

(D1) $\sigma(L_\xi) \subset \{ \hbox{\rm Re} \lambda <0 \} $ for $\xi\ne 0$.

(D2) $\hbox{\rm Re} \sigma(L_{\xi}) \le -\theta |\xi|^2$, $\theta>0$,
for $\xi\in \R^d$ and $|\xi|$ sufficiently small.

(\DDD) $\lambda=0$ is
an eigenvalue
of $L_{0}$ of multiplicity exactly $n+1$.\footnote{
The zero eigenspace of $L_0$ is at least $(n+1)$-dimensional by linearized existence theory and (H2),
and hence $n+1$ is the minimal multiplicity; see \cite{Se1}.}

As shown in \cite{OZ3}, (H1)-(H2) and (D1)--(\DDD)
imply that there exist $n+1$ smooth eigenvalues
\be\label{e:surfaces}
\lambda_j(\xi)= -i a_j(\xi)+o(|\xi|)
\ee
of $L_\xi$ bifurcating from $\lambda=0$ at $\xi=0$, where
$-i a_j$ are homogeneous degree one functions;
see Lemma \ref{blochfacts} below.

As in \cite{OZ4}, we make the further nondegeneracy hypothesis:

(H3) The functions $ a_j(\xi)$ in \eqref{e:surfaces} are distinct.

\noindent
The functions $a_j$ may be seen to be the characteristics
associated with the Whitham averaged system \eqref{e:wkb}--\eqref{con}
linearized about the values of $M$, $S$, $N$, $\Omega$ associated
with the background wave $\bar u$; see \cite{OZ3,OZ4}.
Thus, (D1) implies weak hyperbolicity
of (\ref{e:wkb})--\eqref{con} (reality of $a_j$),
while (H1) corresponds to strict hyperbolicity.

\br\label{Drmks}
\textup{
Condition (\DDD) is a
weakened
version of the condition (D3)
of \cite{OZ4,JZ3} that $\lambda=0$ be a {\it semisimple} eigenvalue
of $L_{0}$ of minimal multiplicity $n+1$, which implies \cite{OZ1,OZ2,Se1}
the special property that wave speed be stationary at $\bar u$ along
the manifold of nearby periodic solutions.
The stronger conditions (D1)--(D3) are exactly the spectral assumptions of
\cite{S1,S2,S3} introduced by Schneider in the reaction-diffusion case.
Conditions (D1)--(D3) (resp. (D1)--(\DDD))
 correspond to ``dissipativity'' of the
large-time behavior of the linearized system \cite{S1,S2,S3}.
}
\er

\subsection{Main result}

With these preliminaries, we can now state our main result.

\begin{theo}\label{main}
Assuming (H1)--(H3) and (D1)--(\DDD),
for some $C>0$ and $\psi \in W^{K,\infty}(x,t)$,
\ba\label{eq:smallsest}
|\tilde u-\bar u(\cdot -\psi)|_{L^p}(t)&\le
C(1+t)^{-\frac{d}{2}(1-1/p)}
|\tilde u-\bar u|_{L^1\cap H^K}|_{t=0},\\
|\tilde u-\bar u(\cdot -\psi)|_{H^K}(t)&\le
C(1+t)^{-\frac{d}{4}}
|\tilde u-\bar u|_{L^1\cap H^K}|_{t=0},\\
|(\psi_t,\psi_x)|_{W^{K+1,p}}&\le
C(1+t)^{-\frac{d}{2}(1-1/p)}
|\tilde u-\bar u|_{L^1\cap H^K}|_{t=0}\\
\ea
for all $t\ge 0$, $p\ge 2$, $d\ge 1$, and
\ba\label{eq:stab}
|\tilde u-\bar u|_{ L^p}(t), \; |\psi(t)|_{L^p}&\le
C(1+t)^{-\frac{d}{2}(1-\frac{1}{p}) + \frac{1}{2}}
|\tilde u-\bar u|_{L^1\cap H^K}|_{t=0}
\ea
for all $t\ge 0$ and
$p=\infty$ or $p\ge 2$ and $d\ge 3$,
for solutions $\tilde u$ of \eqref{eqn:1conslaw} with
$|\tilde u-\bar u|_{L^1\cap H^K}|_{t=0}$ sufficiently small.
In particular, $\bar u$ is nonlinearly bounded
$L^1\cap H^K\to L^\infty$ stable
for $d\ge 1$, asymptotically $L^1\cap H^K\to L^\infty$ stable for $d\ge 2$,
and asymptotically $L^1\cap H^K\to H^K$ stable for $d\ge 3$.
\end{theo}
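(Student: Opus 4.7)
The plan is to combine the nonlinear cancellation framework of \cite{JZ3,OZ4} with a refined low-frequency Bloch expansion that accommodates the non-semisimple zero eigenvalue allowed by the weakened spectral hypothesis (\DDD). First I would write $\tilde u(x,t)=\bar u(x_1-\psi(x,t))+v(x,t)$, substitute into \eqref{eqn:1conslaw}, and derive a coupled system of the schematic form
\[
 v_t-Lv=(\partial_t-L)\bigl[\bar u'(x_1-\psi)\,\psi\bigr]+\cN(v,\psi,\nabla\psi,\psi_t),
\]
where $L$ is the linearized operator from \eqref{e:lin} and $\cN$ collects the quadratic and higher-order remainders. The leading right-hand side is weighted by the translational zero mode $\bar u'$; the modulation $\psi$ will be chosen implicitly to cancel its resonant component, exactly as in \cite{JZ3}.

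Next I would represent $e^{Lt}$ by the inverse Bloch--Fourier formula \eqref{IBFT} and split it as $S_{lf}+S_{hf}$ against a smooth cutoff on $|\xi|\le \rho$. For the high-frequency piece, (D1) together with standard parabolic smoothing yield exponential $L^2\to H^K$ decay and hence pose no obstacle. For the low-frequency piece, (D2), (\DDD), (H3), and Lemma \ref{blochfacts} give a spectral decomposition of $L_\xi$ into the $n+1$ small eigenvalues $\lambda_j(\xi)=-ia_j(\xi)+O(|\xi|^2)$ together with a complement whose spectrum is uniformly bounded away from zero. The hard part of the argument lies entirely in this low-frequency block: under (\DDD) alone, the generalized zero eigenspace of $L_0$ need not be semisimple, and a Jordan block couples the translational direction $\bar u'$ to one of the conservative modes, reflecting the nonstationarity of wave speed along the family of nearby periodic profiles.

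To handle this, I would carry out a Bloch perturbation expansion in a basis adapted to the Whitham structure suggested by \cite{Se1,OZ3,JZB}: distinguish the translational slot from a complement aligned with the mean-value coordinates of \eqref{e:wkb}, and conjugate the small-eigenvalue block of $L_\xi$ by a matrix of the form $\mathrm{diag}(1,|\xi|)$, appropriately placed on the Jordan chain, so that the rescaled family becomes analytic and diagonalizable at $\xi=0$, with eigenvalues matching the characteristics $-ia_j(\xi)$ of the Whitham system \eqref{e:wkb}--\eqref{con}. Inverting this conjugation and reading off $e^{L_\xi t}$ on the invariant subspace shows that the Green kernel component responsible for $\psi$ differs from that responsible for $v$ by a factor $|\xi|^{-1}$; after Bloch integration against $e^{i\xi\cdot x}$ and stationary-phase-type estimates as in \cite{OZ4}, this converts into the asserted rates: Gaussian decay for $v$ and for $(\psi_t,\psi_x)$, and a $t^{1/2}$-degraded bound for $\psi$ itself, matching the loss exhibited in \eqref{eq:stab}.

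With these sharp low-frequency kernel estimates in hand, closure of the nonlinear problem proceeds as in \cite{JZ3,OZ4}. I would write the coupled system as a Duhamel integral against $e^{Lt}$, define $\psi$ implicitly by projecting the right-hand side onto the (translated) zero mode of $L_0$ in order to absorb the resonant contribution via the JZ3 cancellation, and then run a continuous-induction bootstrap on time-weighted norms whose exponents match \eqref{eq:smallsest}--\eqref{eq:stab}. The quadratic structure of $\cN$, together with smallness of $|\tilde u-\bar u|_{L^1\cap H^K}|_{t=0}$, closes the estimates, and Sobolev embedding converts the $L^p$ bounds into the stated $L^\infty$ control and the asymptotic $L^1\cap H^K\to H^K$ stability for $d\ge 3$.
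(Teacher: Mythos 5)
Your outline tracks the paper's proof quite closely: the decisive new ingredient is precisely the $\mathrm{diag}(I_{n-1},|\xi|,1)$ rescaling of the small Bloch block used to regularize the non-semisimple zero eigenvalue under (\DDD) (the paper's $S(\xi)$ in Lemma \ref{blochfacts}), after which the Green-function decomposition $G=\bar u'\,e+\tilde G$, the high/low-frequency split, the JZ3 cancellation, the nonlinear damping estimate, and the continuous-induction bootstrap all proceed exactly as you describe. The only cosmetic difference is the modulation ansatz---the paper shifts the argument of $\tilde u$ via $v(x,t)=\tilde u(x+\psi(x,t),t)-\bar u(x)$ rather than writing $\tilde u=\bar u(\cdot-\psi)+v$---which yields a slightly cleaner perturbation equation but is otherwise equivalent.
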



\br
In Theorem \ref{main}, derivatives in $x\in\RM^d$ for $d\geq 2$ refer to total derivatives.  Moreover, unless specified by an
appropriate index, throughout this paper derivatives in spatial variable $x$ will always refer to the total derivative
of the function.
\er

In dimension one, Theorem \ref{main} asserts only
bounded $L^1\cap H^K \to L^\infty$ stability,
a very weak notion of stability.
The bounds \eqref{eq:smallsest}--\eqref{eq:stab}
agree for dimension $d=1$ with those obtained in \cite{JZ3}
in the stationary wave speed case that (D3) holds in place
of (\DDD), but for higher dimensions are weaker by roughly
factor $t^{1/2}$.

\br\label{nonlinp}
\textup{
In dimension $d=1$, it is straightforward to show that the results of
Theorem \ref{main} extend to all $1\le p\le \infty$ using the pointwise
techniques of \cite{OZ2}; see Remark \ref{lowp}.
}
\er

\subsection{Discussion and open problems}\label{s:discussion}

The proof of Theorem \ref{main} largely completes the line of
investigation carried out in \cite{OZ2,Se1,OZ3,OZ4,JZ3},
showing that spectral stability implies linear and nonlinear stability
of planar spatially periodic traveling waves.
The corresponding spectral stability problem has been studied
analyticially in \cite{OZ1,Se1,OZ3}, yielding various necessary conditions,
and by numerical Evans function investigation in \cite{OZ1}.
An interesting direction for further study
would be more systematic numerical investigation along the lines
of \cite{BLZ,HLyZ1,HLyZ2,BHZ,BLZ} in the viscous shock wave case.
A second interesting open problem would be to extend the results
for planar waves to the case of solutions with multiple periods,
as considered in the reaction--diffusion setting in \cite{S1,S2,S3}.

The key to the nonlinear analysis in critical dimensions $d=1,\, 2$,
as in \cite{JZ3,S1,S2,S3}, is to subtract out
a slower-decaying part of the solution described by an appropriate
modulation equation and show that the residual decays sufficiently
rapidly to close a nonlinear iteration.
Note that the modulated approximation
$\bar u(x_1-\psi(x,t))$ of \eqref{mod} is not the full Ansatz
$\bar u^a(\Psi(x,t))$,
$\Psi(x,t):=x_1-\psi(x,t)$,
associated with the Whitham averaged system \eqref{e:wkb}--\eqref{con},
where $\bar u^a$ is the manifold of periodic
solutions near $\bar u$ introduced below (H2), but only the translational
part not involving perturbations $a$ in the profile.
(See \cite{OZ3} for the derivation of Ansatz and \eqref{e:wkb}--\eqref{con}.)
That is, we don't need to separate out all variations along the
manifold of periodic solutions,
but only the special variations connected with translation invariance.

This can be understood heuristically by the observation that \eqref{e:wkb}
indicates that variables $a$, $\nabla_x \Psi$ are roughly comparable,
which would suggest, by the diffusive behavior $\Psi>>\nabla_x \Psi$,
that $a$ is neglible with respect to $\Psi$.
Indeed, this heuristic argument translates rigorously to our ultimate
computation of linearized behavior leading to the final result;
see Section \ref{prep} and Remark \ref{r:whitrelation}.
In this respect, the connection to the Whitham system is somewhat clearer in
the generic case considered here than in the quasi-Hamiltonian
case treated previously in \cite{OZ2,OZ4,JZ3}.\footnote{
In the degenerate case that the stronger condition (D3) holds,
i.e., wave speed is stationary at $\bar u$, the situation is
somewhat more complicated, and these relations break down;
see \cite{JZ3} for further discussion.}

It would be interesting to better understand
the connection between the Whitham averaged system (or suitable higher-order
correction) and behavior at the nonlinear level, as explored at the
linear level in \cite{OZ3,OZ4,JZ1,JZB}.
As discussed further in \cite{OZ3}, another interesting problem
would be to try to rigorously justify the WKB expansion for the
related vanishing viscosity problem, in the spirit of \cite{GMWZ1,GMWZ2}.

\section{Spectral preparation}\label{prep}
We begin by a careful study of the Bloch perturbation expansion at $\xi=0$.

\begin{lemma}\label{blochfacts}
Assuming (H1)--(H3), (D1)--(\DDD), the eigenvalues
$\lambda_j(\xi/|\xi|, \xi)$ of $L_\xi$
are analytic functions of $\xi/|\xi|$ and $|\xi|$.
Suppose further that $0$ is a nonsemisimple eigenvalue of $L_0$, i.e.,
(D3') holds, but not (D3).
Then, the Jordan structure of the zero eigenspace of $L_0$ consists
of an $n$-dimensional kernel and a single Jordan chain of height $2$,
where the left kernel of $L_0$ is the $n$-dimensional subspace
of constant functions, and $\bar u'$ spans the right
eigendirection lying at the base of the Jordan chain.
Moreover, for $|\xi|$ sufficiently small,
 there exist right and left eigenfunctions
$q_j(\xi/|\xi|, \xi, \cdot)$ and $\tilde q_j(\xi/|\xi|, \xi, \cdot)$
of $L_\xi$ associated with $\lambda_j$ of form
$q_j=\sum_k \beta_{j,k} v_k$ and $\tilde q_j=\sum_k \tilde \beta_{j,k} \tilde v_k$
where $\{v_j\}$ and $\{\tilde v_j\}$ are dual bases of the total
eigenspace of $L_\xi$ associated with sufficiently small eigenvalues,
analytic in $\omega=\xi/|\xi|$ and $|\xi|$,
with $\tilde v_j(\omega;0)$ constant for $j\ne n$ and $v_n(\omega;0)
\equiv \bar u'(\cdot)$;
$\tilde \beta_{j,1}, \dots, \tilde \beta_{j,n-1},
|\tilde \xi|^{-1}\tilde \beta_{j,n}, \tilde \beta_{j,n+1}$
and $\beta_{j,1}, \dots, \beta_{j,n-1}, |\xi|\beta_{j,n}, \beta_{j,n+1}$
are analytic in $\xi/|\xi|$, $|\xi|$; and $\langle \tilde q_j,q_k\rangle=
\delta_j^k$.
\end{lemma}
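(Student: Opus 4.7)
My plan is to reduce the claim to a finite-dimensional perturbation analysis on the total eigenspace of $L_\xi$ associated with eigenvalues bifurcating from $0$, identify the Jordan structure at $\xi=0$ using translation invariance and the manifold of nearby profiles, and finally unfold the resulting Jordan block by rescalings in $|\xi|$ and $|\tilde\xi|$ dictated by the connection with the Whitham averaged system. To begin, (\DDD) together with (D1)--(D2) implies that $\lambda=0$ is an isolated eigenvalue of $L_0$ of algebraic multiplicity $n+1$. Since $L_\xi$ depends analytically on $\xi$, Kato's holomorphic perturbation theory produces, for $|\xi|$ small, an analytic-in-$\xi$ total spectral projection $\Pi_\xi$ of constant rank $n+1$, whose range $\Sigma_\xi$ captures all eigenvalues of $L_\xi$ bifurcating from $0$. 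Fixing an analytic basis $\{v_k(\xi,\cdot)\}$ of $\Sigma_\xi$ together with its dual $\{\tilde v_k(\xi,\cdot)\}$, the restriction $L_\xi|_{\Sigma_\xi}$ is represented by an analytic matrix $M(\xi)\in\CC^{(n+1)\times(n+1)}$.

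Next, I would pin down the Jordan structure of $M(0)$. Since $L_0^*$ has divergence form, constants are in its kernel, so the left kernel of $L_0$ contains the $n$-dimensional subspace of constant $\R^n$-valued functions. For the right side, differentiating the traveling-wave identity $(\bar u^a)''-(f^1(\bar u^a))'=-s(a)(\bar u^a)'$ along the $(n+1)$-parameter family of profiles at $a=0$ (where $s(0)=0$) gives
\[
L_0\,\partial_a\bar u^a\big|_{a=0} \;=\; -s'(0)\,\bar u',\qquad L_0\,\bar u'=0.
\]
Since (D3) is not assumed, at least one tangent direction satisfies $s'(0)\neq 0$, producing a generalized eigenvector whose image under $L_0$ is a nonzero multiple of $\bar u'$; the remaining $n$ tangent directions, all with $s'(0)=0$ and including the translation direction $\bar u'$, yield $n$ kernel elements. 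Combined with algebraic multiplicity $n+1$ from (\DDD), this forces the geometric multiplicity to equal $n$ and the Jordan structure of $M(0)$ to consist of $n-1$ trivial blocks plus a single height-$2$ chain with $\bar u'$ at its base.

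Finally, choose the bases adapted to this Jordan structure so that at $\xi=0$ the vectors $v_1,\dots,v_{n-1}$ span the trivial kernel blocks, $v_n=\bar u'$ sits at the base of the chain, $v_{n+1}$ is the generalized eigenvector, $\tilde v_k$ for $k\neq n$ are constants, and $\tilde v_n$ is the generalized left eigenvector. In these bases $M(\xi)=M_0+|\xi|M_1(\omega)+O(|\xi|^2)$ with $M_0$ supported only by the single entry $1$ in position $(n,n+1)$; a generic first-order perturbation of such a block would split it with $\sqrt{|\xi|}$ branching. However, the identification of $M_1(\omega)$ with the linearization of the Whitham averaged system \eqref{e:wkb}--\eqref{con}, as established in \cite{Se1,OZ3}, forces the would-be square-root obstruction (the $(n+1,n)$ entry of $M_1(\omega)$) to vanish. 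Rescaling $v_n$ by $|\xi|$ and $\tilde v_n$ by $|\tilde\xi|^{-1}$ then converts $M(\xi)$ into a matrix analytic in $(\omega,|\xi|)$ whose $|\xi|=0$ limit has $n+1$ distinct simple eigenvalues $-ia_j(\omega)$ by (H3); Kato's theorem for analytic families with simple spectrum at the reference point then supplies $n+1$ analytic eigenvalues $\lambda_j$ and biorthonormal eigenvectors $q_j=\sum_k\beta_{j,k}v_k$, $\tilde q_j=\sum_k\tilde\beta_{j,k}\tilde v_k$ with the claimed analyticity of $|\xi|\beta_{j,n}$, $|\tilde\xi|^{-1}\tilde\beta_{j,n}$ and of the remaining coefficients. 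The crux of the argument is this cancellation step: extracting from the Whitham analysis the precise vanishing in $M_1(\omega)$ that rules out square-root branching and validates the polar-coordinate rescaling. Once that cancellation is in hand, everything else reduces to routine Kato perturbation theory.
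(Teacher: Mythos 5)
Your proposal follows essentially the same three-step strategy as the paper (reduce to a finite-dimensional perturbation problem for the restricted matrix $M(\xi)$, pin down the Jordan structure at $\xi=0$, then rescale away the nilpotent part), and the Jordan-structure argument is sound even though the paper reaches the same conclusion slightly more economically via the dimension count: constants give an $n$-dimensional left kernel, algebraic multiplicity is $n+1$ by (\DDD), and nonsemisimplicity then forces a single height-$2$ chain, after which $f_*=-\partial_s\bar u$ with $L_0 f_*=\bar u'$ exhibits $\bar u'$ at the base.

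The genuine gap is in the crucial cancellation that makes the rescaling $S=\mathrm{diag}(I_{n-1},|\xi|,1)$ produce an analytic family $\check M_\xi=|\xi|^{-1}SM_\xi S^{-1}$. You assert that ``the identification of $M_1(\omega)$ with the Whitham linearization forces the $(n+1,n)$ entry of $M_1$ to vanish,'' but this both under-states what is needed and does not supply a proof. What the rescaling actually requires is that the \emph{entire} $n$th column of $M_1$ vanish except the $(n,n)$ entry: after conjugation by $S$, every entry $(i,n)$ with $i\neq n$ acquires a factor $|\xi|^{-2}$, so all of them — not just $(n+1,n)$ — must be zero at $|\xi|=0$. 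The vanishing of $(n+1,n)$ alone would rule out square-root branching of the $2\times 2$ nilpotent block in isolation, but it does not control the coupling with the remaining $n-1$ directions, and it does not give analyticity of the eigenvectors, which is the whole point of the lemma. Moreover, citing \cite{Se1,OZ3} is not a proof of the cancellation: those works establish agreement of the dispersion relation (hence $C^1$ behavior of the $\lambda_j$ along rays), not the structural zero column of $M_1$ needed here. The paper closes this gap with the short but essential direct computation
\[
\langle c,\,L^1\bar u'\rangle=\langle c,\,\omega_1\partial_{x_1}^2\bar u-\sum_{j\neq 1}\omega_j\partial_{x_1}f^j(\bar u)\rangle\equiv 0
\]
for any constant $c$, combined with $L_0\bar u'=0$, $L_0^*c=0$, and the choice of the $\tilde v_j$ ($j\neq n$) to be constant at $|\xi|=0$; these three facts together force every entry of the $n$th column of $M_1$ to vanish except at $(n,n)$. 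You should perform this verification rather than appeal to the Whitham connection, which in the paper plays only a motivational role (see Remark \ref{r:whitrelation}). As a minor point, the rescaling factor should be $|\xi|^{-1}$, not $|\tilde\xi|^{-1}$; the latter is a typographical slip in the lemma statement, and your proof should use the radial variable.
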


\begin{proof}
Recall that $L_\xi$ as an elliptic second-order operator on bounded
domain has spectrum consisting of isolated eigenvalues of finite
multiplicity.
Expanding
\be \label{Lpert}
L_\xi=L_0 + |\xi|L^1_{\xi/|\xi|}+ |\xi|^2L^2_{\xi/|\xi|}
\ee
for each fixed angle $\hat \xi:=\xi/|\xi|$,
consider the continuous family of spectral
perturbation problems in $|\xi|$ indexed by angle $\omega=\xi/|\xi|$
about the eigenvalue $\lambda=0$ of $L_0$.

Because $0$ is an isolated eigenvalue of $L_0$, the associated total
right and left eigenprojections $P_0$ and $\tilde P_0$ perturb
analytically in both $\omega$ and $|\xi|$,
giving projection $P_\xi$ and $\tilde P_\xi$ \cite{K}.
These yield in standard fashion
(for example, by projecting appropriately chosen fixed subspaces)
locally analytic right and left bases $\{v_j\}$ and $\{\tilde v_j\}$
of the associated total eigenspaces given by
the range of $P_\xi$, $\tilde P_\xi$.

Defining $V=(v_1, \dots, v_{n+1})$ and
$\tilde V=(\tilde v_1, \dots, \tilde v_{n+1})^*$, $*$ denoting
adjoint, we may convert the infinite-dimensional
perturbation problem \eqref{Lpert} into an $(n+1)\times (n+1)$
matrix perturbation problem
\be\label{Mpert}
M_\xi=M_0+ |\xi|M_1 + |\xi|^2 M_2,
\ee
where $M_\xi(\omega, |\xi|):= \left<\tilde V_\xi^*, L_\xi V_\xi\right>$ and
$\left<\cdot,\cdot\right>$ refers to the $L^2(x_1)$ inner product on $[0,X]$.
That is, the eigenvalues $\lambda_j(\xi)$
lying near $0$ of $L_\xi$ are the eigenvalues
of $M_\xi$, and the associated right and left eigenfunctions
of $L_\xi$ are
\be\label{vecrel}
f_j=V w_j  \;\hbox{\rm  and } \;
\tilde f_j=\tilde w_j \tilde V^* ,
\ee
where $w_j$ and $\tilde w_j$
are the associated right and left eigenvectors of $M_\xi$.

{\it Case (i)}.
If $\lambda=0$ is a semisimple eigenvalue of $L_0$, then
$M_0=0$, and \eqref{Mpert} reduces to the simpler perturbation
problem $\check M_\xi:=|\xi|^{-1}M_\xi= M_1 + |\xi|M_2$
studied in \cite{OZ4,JZ3}, which $\lambda_j(\xi)=|\xi|\check \lambda_j(\xi)$,
$\check \lambda_j(\xi)$ denoting the eigenvalues of $\check M_\xi$.
Since $\check \lambda_j$ are continous, $\lambda_j$ are differentiable
at $|\xi|=0$ in
the parameter $|\xi|$ as asserted in the introduction.
Moreover, by (H3), the eigenvalues $\check\lambda_j(0)$
of $M_1=\check M_0$ are distinct, and so they perturb analytically
in $\omega$, $|\xi|$, as do the associated right and left eigenvectors.

{\it Case (ii)}.
Hereafter, assume that $\lambda=0$ is a nonsemisimple eigenvalue
of $L_0$, so that $M_0$ is nilpotent but nonzero, possessing a
nontrivial associated Jordan chain.
Moreover, as the $n$-dimensional subspace of
constant functions by direct computation lie
in the kernel of $L_0^*= (\partial_{x_1}^2+A_1^*\partial_{x_1})$,
where $A_1(x_1):=df^1(\bar u(x_1))$, we have that the
$(n+1)$-dimensional zero eigenspace of $L_0$ is consists precisely
of an $n$-dimensional kernel and a single Jordan chain of height two.
Moreover, by translation-invariance (differentiate in $x_1$
the profile equation \eqref{e:profile}), we have $L_0\bar u'=0$,
so that $\bar u'$ lies in the right kernel of $L_0$.

Now, recall the assumption (H2) that
$H: \, \R \times {\cal U} \times \R \times S^{d-1} \times \R^n  \rightarrow \R^n$	
taking
$(X; a, s, \nu, q)  \mapsto u(X; a, s, \nu, q)-a$
is full rank at $(\bar{X}; \bar{u}(0), 0, e_1, \bar{q})$,
where $u(\cdot;\cdot)$ is the solution operator of profile ODE
\eqref{e:profile}.
The fact that $\ker L_0$ is $n$-dimensional implies that
the restriction
$\check H$ taking
$(a, q)  \mapsto u(X; a, s, \nu, q)-a$ for fixed $(X,\nu,s)$
is also full rank, i.e., $H$ is full rank with respect to
the specific parameters $(X,s,\nu)$.
Applying the Implicit Function Theorem and counting dimensions,
we find that the set of periodic solutions, i.e., the inverse
image of zero under map $H$ local to $\bar u$
is a smooth $(n+d+1)$-dimensional manifold
$\{\bar u^a(x\cdot \nu(a)-\alpha-s(a)t)\}$,
with $\alpha\in \RR$, $a\in \RR^{n+d}$.
Moreover, $d+1$ dimensions may be parametrized by $(X,s,\nu)$,
or without loss of generality $(a_1, \dots, a_{d+1}) =(X,s,\nu)$.

Fixing $(X,\nu)$ and $(a_{d+2}, \dots, a_{n+d+1})$,
and varying $s$, we find by differentiation of \eqref{e:profile}
that $f_*:=-\partial_s \bar u$ satisfies\footnote{Note the function $f_*$ is $X$-periodic, and hence in the domain of $L_0$
since we have fixed the period $X$.} the generalized eigenfunction
equation
$$
L_0 f_*= \bar u'.
$$
Thus, $\bar u'$ spans the eigendirection lying at the base of the
Jordan chain, with the generalized zero-eigenfunction of $L_0$
corresponding to variations in speed along the manifold of periodic
solutions about $\bar u$.
Without loss of generality, therefore, we may take
$\tilde v_1, \dots \tilde v_{n-1}$ and $\tilde v_{n+1}$ to
be constant at $|\xi|=0$, i.e., depending only on $\omega=\xi/|\xi|$
and not $x_1$, and $v_{n} \equiv \bar u'$ at $|\xi|=0$
independent of $\omega$.

Recalling from \cite{JZ3} the fact that
$$
\langle c,L^1 \bar u'\rangle=
\langle c,( \omega_1(2\partial_{x_1}-A_1) -\sum_{j\ne 1} \omega_j A_j)
)\bar u'\rangle=
\langle c, \omega_1\partial_{x_1}^2 \bar u  -\sum_{j\ne 1} \omega_j
\partial_{x_1} f^j(\bar u)\rangle
\equiv 0
$$
for any constant functions $c$, where again $\langle \cdot, \cdot\rangle$
denotes $L^2(x_1)$ inner product on the interval $x_1\in [0,X]$,
and $A_j:=df^j(\bar u(\cdot))$,
we find under this normalization that \eqref{Mpert} has the special structure
\ba\label{Mstructure}
M_0=\bp 0_{(n-1)\times (n-1)} & 0_{n-1}& 0_{n-1}\\
0& 0 & 1\\
0& 0 & 0\ep,
\qquad
M_1=\bp * & 0_{n-1}& * \\
*& * & *\\
*& 0 & *\ep.
\ea
Now, rescaling \eqref{Mpert} as
\be\label{rescale}
\check M_\xi:= |\xi|^{-1} S(\xi)M_\xi S(\xi)^{-1},
\ee
where
\be\label{S}
S:=\bp I_{n-1}& 0 & 0\\
0 & |\xi| & 0\\
0 & 0 & 1\\
\ep,
\ee
we obtain
\be\label{checkMpert}
\check M_\xi=
\check M_0 + |\xi|\check M_1 + O(|\xi|^2),
\ee
where $\check M_j= \check M_j(\omega)$ like the original
$M_j$ are analytic matrix-valued functions of $\omega$,
and the eigenvalues $m_j(\xi)=m_j(\omega;|\xi|)$ of $\hat M_\xi$ are
$|\xi|^{-1}\lambda_j(\xi)$.

As the eigenvalues $m_j$ of $\check M_\xi$ are continuous,
the eigenvalues $\lambda_j(\xi)=|\xi|m_j$ are differentiable
at $|\xi|=0$ as asserted in the introduction.
Moreover, by (H3), the eigenvalues $\check\lambda_j(0)$
of $\check M_0$ are distinct, and so they perturb analytically
in $\omega$, $|\xi|$, as do the associated right and left eigenvectors
$z_j$ and $\tilde z_j$.  Undoing the rescaling \eqref{rescale},
and recalling \eqref{vecrel}, we obtain the result.
\end{proof}

\br\label{r:whitrelation}
\textup{
Note that the $n$th coordinate of vectors $w\in \CC^{n+1}$
in the perturbation problem \eqref{Mpert} corresponds as
the coefficient of $\bar u'$ to variations
$\Psi$ in displacement.
Thus, rescaling \eqref{rescale}
amounts to substituting for $\Psi$ the variable
$|\xi|\Psi\sim \Psi_x$ of the Whitham averaged system \eqref{e:wkb}.
}
\er

\section{Linearized stability estimates}\label{linests}
By standard spectral perturbation theory \cite{K}, the total
eigenprojection $P(\xi)$ onto the eigenspace of $L_\xi$
associated with the eigenvalues $\lambda_j(\xi)$, $j=1,\dots, n+1$
described in the previous section
is well-defined and analytic in $\xi$ for $\xi$ sufficiently small,
since these (by discreteness of the spectra of $L_\xi$) are
separated at $\xi=0$ from the rest of the spectrum of $L_0$.
Introducing a smooth cutoff function $\phi(\xi)$ that
is identically one for $|\xi|\le \eps$ and identically
zero for $|\xi|\ge 2\eps$, $\eps>0$ sufficiently small,
we split the solution operator $S(t):=e^{Lt}$ into
low- and high-frequency parts
\be\label{SI}
S^I(t)u_0:=
\Big(\frac{1}{2\pi }\Big)^d \int_{-\pi}^{\pi}\int_{\R^{d-1}}
e^{i\xi \cdot x}
\phi(\xi)P(\xi) e^{L_\xi t}\hat u_0(\xi, x_1)
d\xi_1\, d\tilde \xi
\ee
and
\be\label{SII}
S^{II}(t)u_0:=
\Big(\frac{1}{2\pi }\Big)^d \int_{-\pi}^{\pi}\int_{\R^{d-1}}
e^{i\xi \cdot x}
\big(I-\phi P(\xi)\big)
e^{L_\xi t}\hat u_0(\xi, x_1)
d\xi_1\, d\tilde \xi.
\ee

\subsection{High-frequency bounds}\label{HF}
By standard sectorial bounds \cite{He,Pa} and spectral separation
of $\lambda_j(\xi)$ from the remaining spectra of $L_\xi$,
we have trivially the exponential decay bounds
\ba\label{semigp}
\|e^{L_\xi t}(I-\phi P(\xi))f\|_{L^2([0,X])}
&\le   Ce^{-\theta t}\|f\|_{L^2([0,X])},\\
\|e^{L_\xi t}(I-\phi P(\xi))\partial_{x_1}^l f\|_{L^2([0,X])}
&\le   Ct^{-\frac{l}{2}}e^{-\theta t}\|f\|_{L^2([0,X])},\\
\|\partial_{x_1}^l e^{L_\xi t}(I-\phi P(\xi)) f\|_{L^2([0,X])}
&\le   Ct^{-\frac{l}{2}}e^{-\theta t}\|f\|_{L^2([0,X])},
\ea
for $\theta$, $C>0$, and $0\le m\le K$ ($K$ as in (H1)).
Together with (\ref{iso}), these give immediately the
following estimates.

\begin{proposition}[\cite{OZ4}]\label{p:hf}
Under assumptions (H1)--(H3) and (D1)--(D2),
for some $\theta$, $C>0$,
and all $t>0$, $2\le p\le \infty$, $0\le l\le K+1$, $0\le m\le K$,
\ba\label{SIIest}
\|\partial_x^l S^{II}(t) f\|_{L^2(x)},\;
\|S^{II}(t)\partial_x^l f\|_{L^2(x)}&\le
Ct^{-\frac{l}{2}}e^{-\theta t}\|f\|_{L^2(x)},\\
\|\partial_x^m S^{II}(t)  f\|_{L^p(x)},\;
\|S^{II}(t) \partial_x^m f\|_{L^p(x)}&\le
Ct^{-\frac{d}{2}(\frac{1}{2}-\frac{1}{p})- \frac{m}{2}}
e^{-\theta t}\|f\|_{L^2(x)},
\ea
where, again, derivatives in $x\in\RM^d$ refers to total derivatives.
\end{proposition}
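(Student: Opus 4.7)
The plan is to reduce both families of bounds in \eqref{SIIest} to the uniform-in-$\xi$ semigroup estimates \eqref{semigp} combined with the Parseval identity \eqref{iso} of the Bloch--Fourier transform. The estimates \eqref{semigp} themselves I would establish by a three-regime argument in $\xi$: for $|\xi|\le 2\eps$ the cutoff $\phi(\xi)P(\xi)$ projects off exactly the $n+1$ eigenvalues bifurcating from $0$ described in Lemma \ref{blochfacts}, so the residual spectrum of $L_\xi$ stays uniformly bounded away from $\Rp\lambda=0$ by continuity from $L_0$ and (\DDD); for $|\xi|$ in a bounded annulus, (D1) plus continuity of $\sigma(L_\xi)$ in $\xi$ yields a uniform gap $\Rp\sigma(L_\xi)\le -\theta$; for $|\tilde\xi|$ sufficiently large the principal part $\partial_{x_1}^2-|\tilde\xi|^2$ of $L_\xi$ dominates and pushes the spectrum into $\Rp\lambda\le -c|\tilde\xi|^2-\theta$. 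Standard analytic semigroup theory \cite{He,Pa} for the elliptic operator $L_\xi$ on the compact interval $[0,X]$ then produces the derivative-gaining exponential bounds in \eqref{semigp} with constants uniform in $\xi$.

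With \eqref{semigp} in hand, the $L^2\to L^2$ estimate in \eqref{SIIest} follows by applying \eqref{iso} to \eqref{SII}:
\[
\|\partial_x^l S^{II}(t)f\|_{L^2(x)}^2 = (2\pi)^{-d}\int \bigl\|(i\xi+e_1\partial_{x_1})^l(I-\phi P)e^{L_\xi t}\hat f(\xi,\cdot)\bigr\|_{L^2(x_1)}^2 \, d\xi.
\]
In the binomial expansion of $(i\xi+e_1\partial_{x_1})^l$, each factor $\partial_{x_1}^\beta$ is absorbed by \eqref{semigp} at cost $t^{-\beta/2}e^{-\theta t}$, while each $\xi^\alpha$ factor is controlled via the enhanced decay $\|e^{L_\xi t}(I-\phi P)\|\le Ce^{-c|\tilde\xi|^2 t}$ for large $|\tilde\xi|$ together with the elementary bound $|\tilde\xi|^{2|\alpha|}e^{-c|\tilde\xi|^2 t}\le Ct^{-|\alpha|}$ (the $\xi_1\in[-\pi,\pi]$ part is already bounded). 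A second application of \eqref{iso} then yields the claimed $L^2$ estimate; the variant with $\partial_x^l$ placed inside the semigroup is proved identically. To upgrade to $L^p$ for $p>2$ I would invoke the Sobolev embedding $H^s(\R^d)\hookrightarrow L^p(\R^d)$, valid for $s>d(\tfrac12-\tfrac1p)$, and combine it with the $L^2$ bound applied to $m+\lceil s\rceil$ total derivatives, obtaining
\[
\|\partial_x^m S^{II}(t)f\|_{L^p(x)}\le C\|\partial_x^m S^{II}(t)f\|_{H^s(x)}\le Ct^{-m/2-s/2}e^{-\theta t}\|f\|_{L^2(x)},
\]
with the surplus between $s$ and $d(\tfrac12-\tfrac1p)$ harmlessly absorbed into the exponential by slightly shrinking $\theta$.

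The main technical obstacle is the uniform-in-$\xi$ character of \eqref{semigp}, in particular at the large-$|\tilde\xi|$ regime, where one must extract coercive dissipation from the principal part of $L_\xi$ in spite of the first-order periodic-coefficient term in $x_1$ and the $O(|\tilde\xi|)$ convection term. This is handled by a standard Young-inequality absorption against the quadratic form
\[
-\Rp\langle L_\xi u,u\rangle\ge \tfrac12\|\partial_{x_1}u\|_{L^2(x_1)}^2 + |\tilde\xi|^2\|u\|_{L^2(x_1)}^2 - C\|u\|_{L^2(x_1)}^2
\]
together with sectorial resolvent estimates on $[0,X]$; once coercivity and a uniform resolvent bound are in place, the derivative-gaining exponential bounds \eqref{semigp} follow directly from the analytic semigroup theory of \cite{He,Pa}.
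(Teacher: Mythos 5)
Your reduction of the $L^2\to L^2$ estimates to \eqref{semigp} via \eqref{iso} (with the binomial expansion of $(i\xi+e_1\partial_{x_1})^l$ accounting for total $x$-derivatives, the $\tilde\xi^\alpha$ factors controlled by the large-$|\tilde\xi|$ Gaussian decay) is sound and close in spirit to the paper's treatment, which handles the $\tilde x$-derivatives in a separate final paragraph via the same large-$|\xi|$ bound. Your extra work re-deriving \eqref{semigp} from a three-regime analysis of $\sigma(L_\xi)$ is reasonable, though the paper simply cites standard sectorial estimates and spectral separation.

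There is, however, a genuine gap in your $L^p$ upgrade. You write
\[
\|\partial_x^m S^{II}(t)f\|_{L^p}\le Ct^{-m/2-s/2}e^{-\theta t}\|f\|_{L^2},\qquad s>d\bigl(\tfrac12-\tfrac1p\bigr),
\]
and claim that the surplus $s-d(\tfrac12-\tfrac1p)>0$ can be ``absorbed into the exponential by slightly shrinking $\theta$.'' This absorption works only for $t\to\infty$. For $t\to 0^+$ you would need
\[
t^{-\bigl(s-d(\frac12-\frac1p)\bigr)/2}\,e^{-(\theta-\theta')t}\le C,
\]
which fails, since the left side blows up as $t\to 0^+$ when $s>d(\tfrac12-\tfrac1p)$. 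So your argument proves a strictly worse small-$t$ blow-up rate than the one stated, not the rate $t^{-\frac{d}{2}(\frac12-\frac1p)-\frac{m}{2}}$ claimed in \eqref{SIIest}. (Using $\lceil s\rceil$ integer derivatives, as you actually propose, loses even more.) The paper obtains the sharp rate by decoupling variables: it applies Hausdorff--Young in $\tilde x$ only, $\|g\|_{L^\infty(\tilde x)}\le\|\hat g\|_{L^1(\tilde\xi)}$, which together with the Gaussian factor $e^{-\theta|\tilde\xi|^2 t}$ yields the exact transverse rate $t^{-(d-1)/4}$ (no surplus, since $\int e^{-\theta|\tilde\xi|^2 t}\,d\tilde\xi\sim t^{-(d-1)/2}$ exactly), while in the compact $x_1$-variable it uses the one-dimensional Sobolev/Gagliardo--Nirenberg interpolation between the $L^2(x_1)$ bound and the $\partial_{x_1}$-improved bound (costing $t^{-1/2}$) to land on the exact $t^{-d/4}$ for $p=\infty$, followed by $L^p$-interpolation against the $p=2$ case. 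To repair your argument you would need to replace the crude $d$-dimensional Sobolev embedding with an exact Gagliardo--Nirenberg interpolation giving the borderline exponent $s=d(\tfrac12-\tfrac1p)$, or adopt the paper's split of $\tilde x$ from $x_1$.
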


\begin{proof}
The first inequalities follow immediately by (\ref{iso}).
The second follows for $p=\infty$, $m=0$ by Sobolev embedding from
$$
\|S^{II}(t) f\|_{L^\infty(\tilde x; L^2(x_1))}\le
Ct^{-\frac{d-1}{4}}e^{-\theta t}\|f\|_{L^2([0,X])}
$$
and
$$
\|\partial_{x_1} S^{II}(t) f\|_{L^\infty(\tilde x; L^2(x_1))}\le
Ct^{-\frac{d-1}{4} - \frac{1}{2}
}e^{-\theta t}\|f\|_{L^2([0,X])},
$$
which follow by an application of (\ref{iso}) in the $x_1$ variable and
the Hausdorff--Young inequality $\|f\|_{L^\infty(\tilde x)}\le
\|\hat f\|_{L^1(\tilde \xi)}$ in the variable $\tilde x$.
The result for derivatives in $x_1$
and general $2\le p\le \infty$ then follows by
$L^p$ interpolation.  Finally,
the result for derivatives in $\tilde{x}$ follows from the inverse Fourier transform,
equation \eqref{SII}, and the large $|\xi|$ bound
\[
|e^{Lt}f|_{L^2(x_1)}\leq e^{-\theta|\tilde{\xi}|^2 t}|f|_{L^2(x_1)},~|\xi|\textrm{ sufficiently large},
\]
which easily follows from Parseval and the fact that $L_{\xi}$ is a relatively compact perturbation
of $\partial_x^2-|\xi|^2$.  Thus, by the above estimate we have
\begin{align*}
\|e^{Lt}\partial_{\tilde{x}}f\|_{L^2(x)}&\leq C\|e^{L_{\xi}t}|\tilde{\xi}|\hat{f}\|_{L^2(x_1,\xi)}\\
&\leq C\sup\left(e^{-\theta|\tilde{\xi}|^2 t}|\xi|\right)\|\hat{f}\|_{L^2(x_1,\xi)}\\
&\leq Ct^{-1/2}\|f\|_{L^2(x)}.
\end{align*}
A similar argument applies for $1\le m\le K$.
%
%
\end{proof}

\subsection{Low-frequency bounds}\label{LF}

Denote by
\be\label{GI}
G^I(x,t;y):=S^I(t)\delta_y(x)
\ee
the Green kernel associated with $S^I$, and
\be\label{GIxi}
[G^I_\xi(x_1,t;y_1)]:=\phi(\xi)P(\xi) e^{L_\xi t}[\delta_{y_1}(x_1)]
\ee
the corresponding kernel appearing within the Bloch--Fourier representation
of $G^I$, where the brackets on $[G_\xi]$ and $[\delta_y]$
denote the periodic extensions of these functions onto the whole line.
Then, we have the following descriptions of $G^I$, $[G^I_\xi]$,
deriving from the
spectral expansion \eqref{e:surfaces} of $L_\xi$ near $\xi=0$.

\begin{proposition}[\cite{OZ4}]\label{kernels}
Under assumptions (H1)--(H3) and (D1)--(\DDD),
\ba\label{Gxi}
[G^I_\xi(x_1,t;y_1)]&= \phi(\xi)\sum_{j=1}^{n+1}e^{\lambda_j(\xi)t}
q_j(\xi,x_1)\tilde q_j(\xi, y_1)^*,\\
G^I(x,t;y)&=
\Big(\frac{1}{2\pi }\Big)^d \int_{\R^{d}} e^{i\xi \cdot (x-y)}
[G^I_\xi(x_1,t;y_1)] d\xi \\
&=
\Big(\frac{1}{2\pi }\Big)^d \int_{\R^{d}}
e^{i\xi \cdot (x-y)}
\phi(\xi)
\sum_{j=1}^{n+1}e^{\lambda_j(\xi)t} q_j(\xi,x_1)\tilde q_j(\xi, y_1)^*
d\xi,
\ea
where $*$ denotes matrix adjoint, or complex conjugate transpose,
$q_j(\xi,\cdot)$ and $\tilde q_j(\xi,\cdot)$
are right and left eigenfunctions of $L_\xi$ associated with eigenvalues
$\lambda_j(\xi)$ defined in \eqref{e:surfaces},
normalized so that $\langle \tilde q_j,q_j\rangle\equiv 1$.
\end{proposition}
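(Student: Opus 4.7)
The plan is to compute $G^I(x,t;y)=S^I(t)\delta_y(x)$ directly from the defining formula \eqref{SI}, then to expand the resulting kernel on the $(n+1)$-dimensional total eigenspace supplied by Lemma \ref{blochfacts}.

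First I would carry out the Bloch--Fourier transform of the point mass. The standard Fourier transform of $\delta_y$ is $e^{-i\xi\cdot y}$, so the definition $\hat u(\xi,x_1)=\sum_k e^{2\pi i k x_1}\hat u(\xi_1+2\pi k,\tilde\xi)$ yields immediately
\[
\widehat{\delta_y}(\xi,x_1)=e^{-i\xi\cdot y}\,[\delta_{y_1}(x_1)],
\]
where $[\delta_{y_1}(x_1)]=\sum_k e^{2\pi i k(x_1-y_1)}$ is the periodic Dirac comb. Substituting into \eqref{SI} produces the exponential factor $e^{i\xi\cdot(x-y)}$ appearing in \eqref{Gxi}; and because $\phi(\xi)$ is supported in $|\xi|\le 2\eps$ with $\eps\ll\pi$, the $\xi_1$-integral over $[-\pi,\pi]$ may be harmlessly extended to all of $\R$, giving the second identity of \eqref{Gxi} with the shorthand $[G^I_\xi]:=\phi(\xi)P(\xi)e^{L_\xi t}[\delta_{y_1}]$.

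Next I would evaluate $[G^I_\xi]$ by expanding $P(\xi)e^{L_\xi t}$ on its range. Lemma \ref{blochfacts}, together with (H3), furnishes dual bases $\{q_j(\xi,\cdot)\}$, $\{\tilde q_j(\xi,\cdot)\}$ of $\Range P(\xi)$ consisting of right and left eigenfunctions of $L_\xi$ associated with the \emph{distinct} eigenvalues $\lambda_j(\xi)$, biorthonormalized by $\langle\tilde q_j,q_k\rangle=\delta_j^k$. On this finite-dimensional invariant subspace $P(\xi)e^{L_\xi t}$ therefore diagonalizes as $\sum_j e^{\lambda_j(\xi)t}q_j(\xi,\cdot)\langle\tilde q_j(\xi,\cdot),\,\cdot\,\rangle$; evaluating against $[\delta_{y_1}]$ reads off the value of $\tilde q_j$ at $y_1$ and produces the first identity of \eqref{Gxi}.

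The main obstacle, though ultimately mild, is interpreting \eqref{Gxi} at $\xi=0$: by Lemma \ref{blochfacts}, in the nonsemisimple case the coefficient $\beta_{j,n}$ of $q_j$ blows up like $|\xi|^{-1}$ near the origin, so the individual rank-one terms $q_j(\xi,x_1)\tilde q_j(\xi,y_1)^*$ need not be analytic (or even bounded) at $\xi=0$. I would handle this by reading \eqref{Gxi} as an identity for $\xi\ne 0$---which suffices inside the Bloch integral since $\{0\}$ has measure zero---or, equivalently, by first carrying out the computation in the analytic rescaled basis associated to $S(\xi)$ in \eqref{rescale}--\eqref{S}, whose total projector is analytic in $\xi$, and only undoing the rescaling at the end. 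The apparent singularities in the individual summands cancel in the full sum, and this cancellation will be essential for the sharp low-frequency Green-function bounds to follow.
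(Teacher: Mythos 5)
Your proposal is correct and follows essentially the same route as the paper: compute $\widehat{\delta_y}(\xi,x_1)=e^{-i\xi\cdot y}[\delta_{y_1}(x_1)]$ and substitute into \eqref{SI} to get the integral representation, and obtain \eqref{Gxi}(i) from the finite-dimensional spectral decomposition of $P(\xi)e^{L_\xi t}$ on $\Range P(\xi)$ using that the $\lambda_j(\xi)$ are distinct for $0<|\xi|$ small by (H3). Your additional remarks on the apparent singularity of the individual rank-one summands at $\xi=0$ (and the cancellation in the full sum) are a useful expansion of what the paper dispatches in one clause, but they do not constitute a different argument.
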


\begin{proof}
Relation  (\ref{Gxi})(i) is immediate from the spectral decomposition
of elliptic operators on finite domains, and the fact that $\lambda_j$
are distinct for $|\xi|>0$ sufficiently small, by (H3).
Substituting (\ref{GI}) into (\ref{SI})
and computing
\be\label{comp1}
\widehat{\delta_y}(\xi,x_1)=
\sum_k e^{2\pi i kx_1}\widehat{\delta_y}(\xi + 2\pi k e_1)=
\sum_k e^{2\pi i kx_1}e^{-i\xi \cdot y-2\pi i ky_1}
= e^{-i\xi \cdot y}[\delta_{y_1}(x_1)],
\ee
where the second and third equalities follow from the fact that
the Fourier transform either continuous or discrete of
the delta-function is unity, we obtain
\ba\label{GIsub}
G^I(x,t;y)&=
\Big(\frac{1}{2\pi }\Big)^d \int_{-\pi}^{\pi}\int_{\R^{d-1}}
e^{i\xi \cdot x} \phi P(\xi) e^{L_\xi t} \widehat{\delta_y}(\xi,x_1)d\xi\\
\nonumber
&=
\Big(\frac{1}{2\pi }\Big)^d \int_{-\pi}^{\pi}\int_{\R^{d-1}}
e^{i\xi \cdot (x-y)}  \phi P(\xi)e^{L_\xi t} [\delta_{y_1}(x_1)] d\xi,
\ea
yielding (\ref{Gxi})(ii) by (\ref{GIxi})(i) and the fact that $\phi$
is supported on $[-\pi,\pi]$.
\end{proof}


\begin{proposition} \label{Gbds}
Under assumptions (H1)-(H3) and (D1)-(\DDD),
the low-frequency Green function $G^I(x,t;y)$ of \eqref{GI} decomposes as
$G^I=E+\tilde G^I$,
\be\label{E1}
E=\bar u'(x)e(x,t;y),
\ee
where, for some $C>0$, all $t>0$,
\ba\label{GIest}
\sup_{y}\|\tilde G^I(\cdot, t,;y) \|_{L^p(x)}
 &\le  C (1+t)^{-\frac{d}{2}(1-\frac{1}{p})}\\
\sup_{y}\|\partial_{y}^r \tilde G^I(\cdot, t,;y) \|_{L^p(x)},
\quad
\sup_{y}\|\partial_{t}^r \tilde G^I(\cdot, t,;y) \|_{L^p(x)}
 &\le  C (1+t)^{-\frac{d}{2}(1-\frac{1}{p})-\frac{1}{2}}\\
\ea
for $p\ge 2$, $1\le r\le 2$,
\ba\label{ederest}
\sup_{y}\| \partial_x^j \partial_t^l
\partial_{y}^r e(\cdot, t,;y) \|_{L^p(x)}
 &\le  C (1+t)^{-\frac{d}{2}(1-\frac{1}{p})- \frac{(j+l)}{2}-\frac{1}{2}}\\
\ea
for $p\ge 2$, $0\le j,k, l$, $j+l\le K$, $1\le r\le 2$,
and
\ba\label{eest}
\sup_{y}\|\tilde \partial_x^j \partial_t^l e(\cdot, t,;y) \|_{L^p(x)}
 &\le  C (1+t)^{-\frac{d}{2}(1-\frac{1}{p})-\frac{(j+l)}{2}}\\
\ea
for $0\le j,k, l$, $j+l\le K$,
provided that $p\ge 2$ and $j+l\ge 1$ or $d\ge 3$,
or $p=\infty$ and $d\ge 1$.
Moreover, $e(x,t;y)\equiv 0$ for $t\le 1$.
\end{proposition}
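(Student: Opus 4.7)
The plan is to substitute the eigenfunction expansions from Lemma~\ref{blochfacts} into representation \eqref{Gxi}(ii) of Proposition~\ref{kernels}, and to isolate the contributions responsible for the slow decay of $G^I$ coming from the Jordan block at $\lambda=0$.

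Writing $q_j=\sum_k \beta_{j,k}v_k$, $\tilde q_j=\sum_k \tilde\beta_{j,k}\tilde v_k$, and using $v_n(\xi,x_1)=\bar u'(x_1)+|\xi|w(\xi,x_1)$ with $w$ analytic in $(\omega,|\xi|)$, I would define
\[
E(x,t;y) := \bar u'(x_1)\,\chi(t)\Bigl(\tfrac{1}{2\pi}\Bigr)^d\!\int_{\R^d}e^{i\xi\cdot(x-y)}\phi(\xi)\sum_{j,k'}e^{\lambda_j(\xi)t}\beta_{j,n}(\xi)\overline{\tilde\beta_{j,k'}(\xi)}\,\tilde v_{k'}(\xi,y_1)^*\,d\xi
\]
and $\tilde G^I := G^I-E$, with $\chi\in C^\infty$ satisfying $\chi=0$ for $t\le 1$ and $\chi=1$ for $t\ge 2$ (enforcing $e\equiv 0$ on $t\le 1$, with the $1\le t\le 2$ correction absorbed into $\tilde G^I$). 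By Lemma~\ref{blochfacts}, the only $\xi$-singular factor in the integrand of $G^I$ is $\beta_{j,n}\sim 1/|\xi|$; after pulling it out together with the value $\bar u'$ of $v_n$ at $\xi=0$, what remains in $\tilde G^I$ has an integrand that is analytic in $\xi$, since the $v_n-\bar u'$ remainder carries an $|\xi|$ factor canceling the singularity.

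For the $\tilde G^I$ bounds \eqref{GIest}, I would apply Parseval/Hausdorff--Young to the Bloch representation, together with the parabolic bound $|e^{\lambda_j(\xi)t}|\le e^{-\theta|\xi|^2 t}$ from (D2). The analytic integrand combined with Gaussian decay gives the heat-kernel $L^p$ rate $(1+t)^{-\frac{d}{2}(1-1/p)}$ for $p\ge 2$, and each derivative $\partial_x,\partial_y,\partial_t$ brings down a factor $O(|\xi|)$ (either from $e^{i\xi\cdot(x-y)}$ or from $\lambda_j=O(|\xi|)$) yielding an extra $(1+t)^{-1/2}$, producing \eqref{GIest}(ii). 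The derivative bounds \eqref{ederest} on $e$ follow the same way: each derivative produces an $O(|\xi|)$ that exactly cancels the $1/|\xi|$ from $\beta_{j,n}$, restoring the heat-kernel rate. For the $\tilde x$-derivatives I would use in addition the large-frequency argument already employed in Proposition~\ref{p:hf}.

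The delicate case is \eqref{eest}, the undifferentiated bound on $e$. Rescaling $\xi=\eta/\sqrt t$ shows that $e$ behaves as a Gaussian-truncated Riesz potential of order one, with nominal $L^p$ rate $(1+t)^{-\frac{d}{2}(1-1/p)+\frac12}$; the restrictions ``$p\ge 2$ and $d\ge 3$, or $p=\infty$ and $d\ge 1$'' are precisely those for which the corresponding rescaled $\eta$-integral converges. The main obstacle is the borderline low-dimensional cases, especially $p=\infty$, $d=1$, where a naive absolute-value estimate of the integrand yields only logarithmic $t$-growth. Here the key is to exploit the duality identity
\[
\sum_{j=1}^{n+1}\beta_{j,n}(\xi)\,\overline{\tilde\beta_{j,k'}(\xi)}=(B\tilde B^*)_{n,k'}=\delta_{n,k'}=0\qquad(k'\ne n),
\]
which, combined with the distinctness of the $\lambda_j$ from (H3), rewrites the singular combination $\sum_j \beta_{j,n}\overline{\tilde\beta_{j,k'}}e^{\lambda_j t}=\sum_j\beta_{j,n}\overline{\tilde\beta_{j,k'}}(e^{\lambda_j t}-e^{\lambda_{j_0} t})$ as an error-function-type integral that is uniformly bounded in $(x,t)$. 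Once this cancellation is in place, everything remaining reduces to routine applications of Plancherel and Hausdorff--Young as in Proposition~\ref{p:hf}.
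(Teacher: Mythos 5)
Your decomposition, your definition of $E$, and your estimates of $\tilde G^I$ via Parseval/Hausdorff--Young plus $L^p$-interpolation are exactly the route the paper takes: pull out the singular $\beta_{j,n}$ coefficient together with $v_n(0,\cdot)=\bar u'$, observe that what remains has a bounded integrand (the paper writes the boundedness as \eqref{crucial}--\eqref{crucial4}), and exploit that $y$- and $t$-derivatives bring down an extra $|\xi|$. You also correctly identify the nominal rate $(1+t)^{-\frac{d}{2}(1-1/p)+\frac12}$ for the undifferentiated $e$, which is the rate that actually appears in Corollary~\ref{greenbds} (the displayed exponents in \eqref{eest} and \eqref{ederest} look to be off by $\frac12$ relative to \eqref{etbds}; your computation is the consistent one). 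Two small cautions: (i) the integrand of $\tilde G^I$ is not ``analytic in $\xi$'' near $\xi=0$ --- it is bounded and analytic in $(\xi/|\xi|,|\xi|)$, which is all your estimates need but is weaker than what you wrote; (ii) $\partial_x$ acting on $\tilde G^I$ does not bring down an $O(|\xi|)$ factor, because it can fall on the nonconstant $v_k(\xi,x_1)$; this is immaterial since \eqref{GIest} involves only $\partial_y$ and $\partial_t$, but you should not state it as a general fact. On $e$ itself, whose only $x$-dependence is through $e^{i\xi\cdot x}$, your claim is fine.

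Where you genuinely differ from the paper is the $d=1$, $p=\infty$ endpoint. The paper simply invokes the one-dimensional pointwise analysis of \cite{OZ2}, noting that in $d=1$ the $|\xi|^{-1}$ contributions are analytic multiples of $\xi^{-1}$ whose principal-value Fourier integrals are bounded traveling error functions. You instead propose the dual-basis identity $\sum_j\beta_{j,n}\overline{\tilde\beta_{j,k'}}=\delta_{n,k'}$, which is correct (it is $(\bar{\tilde B}B^\trans)^{-1}=\Id$ transposed) and lets you subtract a reference exponential for $k'\ne n$. Be aware, though, that after this subtraction a triangle-inequality/Hausdorff--Young estimate still gives only $O(\log t)$ in $d=1$, since $|e^{\lambda_j t}-e^{\lambda_{j_0}t}|/|\xi|\lesssim\min(t,1/|\xi|)e^{-\theta|\xi|^2t}$; you must still evaluate the resulting oscillatory integral as you say (``error-function-type'') to obtain $O(1)$. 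And once one is evaluating those oscillatory integrals, the cancellation identity is no longer strictly necessary --- each term $\beta_{j,n}\overline{\tilde\beta_{j,k'}}e^{\lambda_j t}/\xi$ already has a bounded principal-value transform, as the paper observes. Your identity does give the cleaner picture of localized error-function differences (step-like profiles supported between the characteristic speeds $a_j t$ and $a_{j_0}t$) rather than $\sgn$-type profiles, so it is a nice structural observation, but present it as a refinement of, not a substitute for, the oscillatory-integral evaluation.
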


\br
In Proposition \ref{Gbds}, and throughout the remainder of the paper, derivatives in $y\in\RM^d$
refer to total derivatives, just as with the variable $x\in\RM^d$.
\er

\begin{proof}
In the degenerate case (D3) that $0$ is a semisimple eigenvalue of $L_0$,
these estimates have been established in \cite{OZ4,JZ3}.
Without loss of generality, therefore, we
hereafter assume that $0$ is a nonsemisimple eigenvalue of $L_0$,
with the consequences described in
Lemma \ref{blochfacts}.
Recalling that
\ba\label{Gnew}
G^I(x,t;y)&=
\Big(\frac{1}{2\pi }\Big)^d \int_{\R^{d}}
e^{i\xi \cdot (x-y)}
\phi(\xi)
\sum_{j=1}^{n+1}e^{\lambda_j(\xi)t} q_j(\xi,x_1)\tilde q_j(\xi, y_1)^*
d\xi\\
&=
\Big(\frac{1}{2\pi }\Big)^d \int_{\R^{d}}
e^{i\xi \cdot (x-y)}
\phi(\xi)
\sum_{j,k,l=1}^{n+1}e^{\lambda_j(\xi)t} \beta_{j,k}v_k(\xi,x_1)
\tilde \beta_{j,l}\tilde v_l(\xi, y_1)^*
d\xi,
\ea
define
\ba\label{enew}
\tilde e(x,t;y)&=
\Big(\frac{1}{2\pi }\Big)^d \int_{\R^{d}}
e^{i\xi \cdot (x-y)}
\phi(\xi)
\sum_{j,l}e^{\lambda_j(\xi)t} \beta_{j,n}
\tilde \beta_{j,l}\tilde v_l(\xi, y_1)^* d\xi
\ea
so that
\ba\label{Gdiff}
G^I&(x,t;y) -\bar u'(x_1)\tilde e(x,t;y)=\\
&\Big(\frac{1}{2\pi }\Big)^d \int_{\R^{d}}
e^{i\xi \cdot (x-y)} \phi(\xi)
\sum_{j, k\ne n, l} e^{\lambda_j(\xi)t}
\beta_{j,k} \tilde \beta_{j,l}v_k(\xi,x_1)\tilde v_l(\xi, y_1)^* d\xi\\
&\quad +
\Big(\frac{1}{2\pi }\Big)^d \int_{\R^{d}}
e^{i\xi \cdot (x-y)} \phi(\xi)
\sum_{j, l} e^{\lambda_j(\xi)t}
\beta_{j,n} \tilde \beta_{j,l}
\Big(v_n(\xi,x_1)-\bar u'(x_1)\Big)
\tilde v_l(\xi, y_1)^* d\xi,\\
\ea
where, by analyticity of $v_n$,
$v_n(\xi,x_1)-\bar u'(x_1)=O(|\xi|)$, and so, by Lemma \ref{blochfacts},
\be\label{crucial}
\beta_{j,n} \tilde \beta_{j,l}
\Big(v_n(\xi,x_1)-\bar u'(x_1)\Big)
\tilde v_l(\xi, y_1)^* =O(1)
\ee
and
\be\label{crucial2}
\beta_{j,k} \tilde \beta_{j,l}v_k(\xi,x_1)\tilde v_l(\xi, y_1)^*
=O(1) \; \hbox{\rm for }\; k\ne n.
\ee
Note further that $\tilde v_l\equiv \const$ unless $l=n$,
in which case $\tilde \beta_{jl}=O(|\xi|)$ by Lemma \ref{blochfacts};
hence
\be\label{crucial3}
\partial_{y_1}\Big(\beta_{j,n} \tilde \beta_{j,l}
\Big(v_n(\xi,x_1)-\bar u'(x_1))
\tilde v_l(\xi, y_1)^* \Big) =O(|\xi|)
\ee
and
\be\label{crucial4}
\partial_{y_1}
\Big(\beta_{j,k} \tilde \beta_{j,l}v_k(\xi,x_1)\tilde v_l(\xi, y_1)^* \Big)
=O(|\xi|) \; \hbox{\rm for }\; k\ne n.
\ee

From representation (\ref{Gdiff}), bounds \eqref{crucial}--\eqref{crucial2},
and $\Re \lambda_j(\xi)\le -\theta |\xi|^2$,
we obtain by the triangle inequality
\be
\|G^I-\bar u' \tilde e\|_{L^\infty(x,y)}\le C\|e^{-\theta |\xi|^2 t} \phi(\xi)\|_{L^1(\xi)}
 \le  C (1+t)^{-\frac{d}{2}}.
\ee
Derivative bounds follow similarly,
since $x_1$-derivatives falling on $v_{jk}$ are harmless, whereas,
by \eqref{crucial3}--\eqref{crucial4},
$y_1$- or $t$-derivatives falling on $\tilde v_{jl}$
or on $e^{i\xi\cdot(x-y)}$ bring down a factor
of $|\xi|$ improving the decay rate by factor $(1+t)^{-1/2}$.
(Note that $|\xi|$ is bounded because of the cutoff function $\phi$,
so there is no singularity at $t=0$.)

To obtain bounds for $p=2$, we note that (\ref{GIest} may be viewed
itself as a Bloch--Fourier decomposition with respect to variable
$z:=x-y$, with $y$ appearing as a parameter.
Recalling (\ref{iso}), we may thus estimate
\ba
\sup_y &\|G^I(\cdot,t;y)-\bar u'\tilde e(\cdot, t;y)\|_{L^2(x)}
\le\\
&
C \sum_{j, k\ne n, l}
\sup_y \|\phi(\xi) e^{\lambda_j(\xi)t}
v_k(\cdot, z_1)\tilde v_l^*(\cdot, y_1)
\tilde v_l(\cdot, y_1)^* \|_{L^2(\xi; L^2(z_1\in [0,X]))}\\
&\quad +
C\sum_{j, l} \sup_y \|\phi(\xi) e^{\lambda_j(\xi)t}
\Big( \frac{v_n(\cdot,x_1)-\bar u'(x_1)}{|\cdot|}\Big)
\tilde v_l(\cdot, y_1)^* \|_{L^2(\xi; L^2(z_1\in [0,X]))}\\
&\le
C \sum_{j, k\ne n, l} \sup_y \|\phi(\xi) e^{-\theta |\xi|^2t} \|_{L^2(\xi)}
\sup_\xi\| v_k(\cdot, z_1) \|_{L^2(0,X)}
\| \tilde v_l(\cdot, y_1)^* \|_{L^\infty(0,X)}
\\
&\quad +
C\sum_{j, l} \sup_y \|\phi(\xi) e^{-\theta |\xi|^2t} \|_{L^2(\xi)}
\sup_\xi\| \Big( \frac{v_n(\xi,x_1)-\bar u'(x_1)}{|\xi|}\Big) \|_{L^2(0,X)}
\|\tilde v_l(\cdot, y_1)^* \|_{L^\infty(0,X)} \\
&\le
 C (1+t)^{-\frac{d}{4}},
\ea
where we have used in a crucial way the boundedness of $\tilde v_l$
in $L^\infty$,\footnote{This is clear for $\xi=0$, since $v_j$
are linear combinations of genuine and generalized eigenfunctions,
which are solutions of the homogeneous or inhomogeneous eigenvalue ODE.
More generally, note that resolvent of $L_\xi-\gamma$
gains one derivative, hence the total eigenprojection, as a contour
integral of the resolvent, does too- now, use the one-dimensional
Sobolev inequality for periodic boundary conditions
to bound the $L^\infty$ difference from the
mean by the (bounded) $H^1$ norm, then bound the mean by the $L^1$ norm,
which is controlled by the $L^2$ norm.}
and also the boundedness of
$$
\Big( \frac{v_n(\xi,x_1)-\bar u'(x_1)}{|\xi|}\Big)
\sim
\partial_{|\xi|}v_n(\omega; r)
$$
in $L^2$, where $0<r<|\xi|$.
Derivative bounds follow similarly as above, noting that
$y$- or $t$-derivatives bring down a factor $|\xi|$, while
$x$-derivatives are harmless, to obtain an additional factor
of $(1+t)^{-1/2}$ decay.
Finally, bounds for $2\le p\le \infty$ follow by $L^p$-interpolation.

Defining
\be\label{edef}
e(x,t;y):= \chi(t)\tilde e(x,t;y),
\ee
where $\chi$ is a smooth cutoff function
such that $\chi(t)\equiv 1$ for $t\ge 2$ and $\chi(t)\equiv 0$ for $t\le 1$,
and setting $\tilde G:=G-\bar u'(x_1)e(x,t;y)$,
we readily obtain the estimates \eqref{sheatbds} by combining
the above estimates on $G^I-\bar u \tilde e$
with bound \eqref{SIIest} on $G^{II}$.

Finally, recalling, by Lemma \ref{blochfacts}, that $\tilde v_l\equiv \const$
for $l\ne n$ while $\tilde \beta_{j,n}=O(|\xi|)$, we have
$$
\partial_{y_1} \Big( \beta_{j,n} \tilde \beta_{j,l}\tilde v_l(\xi, y_1)^*\Big)
=o(|\xi|).
$$
Bounds \eqref{ederest} thus
follow from \eqref{enew} by the argument
used to prove \eqref{GIest}, together with the observation that
$x$- or $t$-derivatives bring down factors of $|\xi|$.

Bounds \eqref{eest} follow similarly for $p=\infty$
if $e^{-\theta |\xi|^2t}/|\xi|$ is integrable in $\RR^d$,
and for $p\ge 2$ if $e^{-\theta |\xi|^2t}/|\xi|^2$ is integrable,
thus yielding the stated results for all $d\ge 2$.
In the special case $d=1$, $p=\infty$,
\eqref{Lpert} becomes a simpler one-parameter perturbation
in $\xi$, and the $|\xi|^{-1}$ contributions become
analytic multiples of $\xi^{-1}$, whose principal value
integrals may be carried out explicitly to give a
sum of traveling error functions that is bounded in $L^\infty$;
see the proof of Proposition 1.5, \cite{OZ2} in the one-dimensional case.
We omit this calculation as largely outside our analysis.
(However, note that we need this bound to conclude $L^\infty$
bounded stability in the one-dimensional case.)

\end{proof}

\begin{remark}\label{greenformula}
\textup{
Underlying
our
analysis,
and that of \cite{OZ2,JZ3}, is the fundamental
relation
\be\label{greenform}
G(x,t;y)=
\Big(\frac{1}{2\pi }\Big)^d \int_{-\pi}^{\pi}\int_{\R^{d-1}}
e^{i\xi \cdot (x-y)}[G_\xi(x_1,t;y_1)]d\xi.
\ee
}
\end{remark}

\subsection{Final linearized bounds}\label{s:finallin}

\begin{cor}\label{greenbds}
Under assumptions (H1)--(H3), (D1)--(\DDD),
the Green function $G(x,t;y)$ of \eqref{e:lin} decomposes as
$G=E+\tilde G$,
\be\label{E}
E=\bar u'(x)e(x,t;y),
\ee
where, for some $C>0$, all $t>0$, $1\le q\le 2\le p\le \infty$, $0\le j,k, l$,
$j+l\le K$, $1\le r\le 2$,
\ba\label{sheatbds}
\Big|\int_{-\infty}^{+\infty} \tilde G(x,t;y)f(y)dy\Big|_{L^p(x)}&\le
C (1+t)^{-\frac{d}{2}(1/2-1/p)} t^{-\frac{1}{2}(1/q-1/2)}
|f|_{L^q\cap L^2},\\
\Big|\int_{-\infty}^{+\infty} \partial_y^r \tilde G(x,t;y)f(y)dy\Big|_{L^p(x)}&\le
C (1+t)^{-\frac{d}{2}(1/2-1/p)-\frac{1}{2}+\frac{r}{2}} \\
&\quad \times
t^{-\frac{d}{2}(1/q-1/2)-\frac{r}{2}} |f|_{L^q\cap L^2},\\
\Big|\int_{-\infty}^{+\infty} \partial_t^r \tilde G(x,t;y)f(y)dy\Big|_{L^p(x)}&\le
C (1+t)^{-\frac{d}{2}(1/2-1/p)-\frac{1}{2}+r}
\\ &\quad \times t^{-\frac{d}{2}(1/q-1/2)-r} |f|_{L^q\cap L^2}.\\
\ea
\ba\label{etbds}
\Big|\int_{-\infty}^{+\infty} \partial_x^j\partial_t^k e(x,t;y)f(y)dy\Big|_{L^p}
&\le
(1+t)^{-\frac{d}{2}(1/q-1/p) -\frac{(j+k)}{2} +\frac{1}{2} }|f|_{L^q},\\
\Big|\int_{-\infty}^{+\infty} \partial_x^j\partial_t^k\partial_y^r e(x,t;y)f(y)dy\Big|_{L^p}
&\le
(1+t)^{-\frac{d}{2}(1/q-1/p) -\frac{(j+k)}{2} }|f|_{L^q}.\\
\ea
Moreover, $e(x,t;y)\equiv 0$ for $t\le 1$.
\end{cor}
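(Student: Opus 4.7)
The plan is to combine the pointwise kernel bounds from Propositions \ref{p:hf} and \ref{Gbds} with Minkowski's inequality and a Riesz--Thorin interpolation against an $L^2\to L^2$ bound coming from the Bloch--Parseval isometry \eqref{iso}. First I would decompose
\be\nonumber
G = G^I + G^{II} = E + \tilde G^I + G^{II},
\ee
with $\tilde G^I := G^I - E$ as in Proposition \ref{Gbds}, and then set $\tilde G := \tilde G^I + G^{II}$ so that $G = E + \tilde G$ as claimed. The contribution of $G^{II}$ is immediately absorbed: Proposition \ref{p:hf} gives convolution estimates decaying like $e^{-\theta t}$, which dominate any of the algebraic rates appearing on the right-hand sides of \eqref{sheatbds}.

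For the $\tilde G^I$ part, Minkowski's integral inequality together with \eqref{GIest} immediately converts the uniform-in-$y$ kernel bound $\sup_y \|\partial_y^r \tilde G^I(\cdot,t;y)\|_{L^p(x)} \le C(1+t)^{-\frac{d}{2}(1-1/p)-r/2}$ into an $L^1 \to L^p$ convolution bound with the same decay rate, and likewise for $t$-derivatives (noting $\partial_t e^{L_\xi t} = L_\xi e^{L_\xi t}$ contributes an extra $|\xi|^2$ factor on the low-frequency piece). To move to general $1\le q\le 2$, I would derive the companion $L^2\to L^2$ bound by writing
\ba\nonumber
\Big\|\int \tilde G^I(\cdot,t;y) f(y)\,dy\Big\|_{L^2(x)}^2
&\le C\int_{|\xi|\le 2\eps} |\xi|^{2r}\,e^{-2\theta|\xi|^2 t} \,\|\hat f(\xi,\cdot)\|_{L^2([0,X])}^2\, d\xi \\\nonumber
&\le C(1+t)^{-\frac{d}{2}\cdot \frac{1}{2}-r}\, \|f\|_{L^2}^2
\ea
via \eqref{iso}, the Bloch expansion used in Proposition \ref{Gbds}, and (D2) (the $|\xi|^r$ factor reflects the $y$- or $t$-derivative). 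Interpolating this $L^2\to L^2$ bound against the Minkowski $L^1\to L^p$ bound by Riesz--Thorin produces the stated $L^q\cap L^2 \to L^p$ estimates for $1\le q\le 2$ with the claimed decay exponents.

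For the $E$ part, since $E(x,t;y) = \bar u'(x)\,e(x,t;y)$ and $\bar u' \in L^\infty$, the convolution estimates reduce to $L^p$ bounds on $\int e(\cdot,t;y)f(y)\,dy$. These follow by the same Minkowski + Bloch-Parseval + interpolation scheme, now using the kernel bounds \eqref{ederest}--\eqref{eest}: the $L^1\to L^p$ leg comes directly from $\sup_y\|\partial_x^j\partial_t^k\partial_y^r e(\cdot,t;y)\|_{L^p(x)}$, and the $L^2\to L^2$ leg comes from the same Bloch-Parseval computation, with $y$- or $t$-derivatives providing the improved $|\xi|^r$ factor. The cutoff $\chi(t)$ in the definition \eqref{edef} enforces $e\equiv 0$ for $t\le 1$, eliminating any short-time singularity.

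The main obstacle is matching the precise exponents in \eqref{sheatbds}--\eqref{etbds} after interpolation, in particular tracking how $x$-derivatives (harmless, since they act on the bounded functions $\bar u'$, $v_k$) differ from $y$- and $t$-derivatives (which bring down $|\xi|$ factors improving decay). Once the $L^1\to L^p$ Minkowski bound and the $L^2\to L^2$ Bloch bound are in place with the right powers of $(1+t)$ and $|\xi|$, the rest is a routine interpolation bookkeeping that reproduces the decay rates displayed in the corollary.
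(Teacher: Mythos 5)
Your decomposition $G=E+\tilde G^I+G^{II}$ and the treatment of $G^{II}$ by exponential decay, and the $L^1\to L^p$ leg via the $\sup_y$ kernel bounds from Proposition \ref{Gbds}, all match the paper exactly. But the $L^2$ half of the argument has a genuine error, and the interpolation scheme as you've set it up doesn't cover the stated range.

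The claimed $L^2\to L^2$ bound $C(1+t)^{-d/4-r}\|f\|_{L^2}^2$ is false for $r=0$. By the Bloch--Parseval isometry \eqref{iso}, the $L^2(x)\to L^2(x)$ operator norm of $f\mapsto \int \tilde G^I(\cdot,t;y)f(y)\,dy$ equals the supremum over $\xi$ of the fiberwise $L^2(x_1)\to L^2(x_1)$ norms; since those fibers involve $e^{-\theta|\xi|^2 t}$ times uniformly bounded projection-type operators and $\sup_\xi e^{-\theta|\xi|^2 t}=1$, the $L^2\to L^2$ bound is only $O(1)$ (this is exactly \eqref{triv} in the paper). You cannot extract a $(1+t)^{-d/4}$ from it, and indeed the corollary's own $q=p=2$, $r=0$ exponent is $(1+t)^0$. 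The $(1+t)^{-d/4}$ decay for $q=2$ enters only when $p=\infty$, and the paper obtains it by a separate estimate: bounding $\|\tilde G^I f\|_{L^\infty(x)}$ by the $L^1(\xi)$ norm of the Bloch integrand (a Hausdorff--Young step), then interpolating in $p$ between $p=2$ (no decay) and $p=\infty$. This $L^2\to L^\infty$ step is missing from your proposal. Relatedly, Riesz--Thorin between $L^1\to L^p$ and $L^2\to L^2$ only produces estimates along a one-parameter curve in $(q,p)$-space, not the full rectangle $1\le q\le 2\le p\le\infty$; the paper instead first establishes $L^1\to L^p$ for all $p\ge 2$ and $L^2\to L^p$ for all $p\ge 2$ (the latter by interpolating $p=2$ against $p=\infty$), and only then Riesz--Thorin interpolates in $q$ for each fixed $p$. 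You need the $L^2\to L^\infty$ Bloch/Hausdorff--Young estimate, and the two-step interpolation, to close the argument.
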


\begin{proof}
({\it Case $q=1$}).
From (\ref{GIest}) and the triangle inequality we obtain
$$
\Big\|\int_{\R^d}\tilde G^I(x,t;y)f(y)dy\Big\|_{L^p(x)}
\le
\int_{\R^d}\sup_y \|\tilde G^I(\cdot ,t;y)\|_{L^p}|f(y)|dy
\le C(1+t)^{-\frac{d}{2}(1-1/p)}\|f\|_{L^1}
$$
and similarly for $y$- and $t$-derivative estimates, which,
together with \eqref{SIIest}, yield \eqref{sheatbds}.
Bounds \eqref{etbds} follow similarly by the triangle inequality
and \eqref{ederest}--\eqref{eest}.

({\it Case $q=2$}).
From \eqref{crucial}--\eqref{crucial2}, and analyticity of
$v_j$, $\tilde v_j$, we have boundedness
from $L^2[0,X]\to L^2[0,X]$
of the projection-type operators
\be\label{crucialx}
f\to
\beta_{j,n} \tilde \beta_{j,l}
\Big(v_n(\xi,x_1)-\bar u'(x_1)\Big)
\langle \tilde v_l, f\rangle
\ee
and
\be\label{crucial2x}
f\to
\beta_{j,k} \tilde \beta_{j,l}v_k(\xi,x_1)
\langle \tilde v_l,f\rangle
 \; \hbox{\rm for }\; k\ne n,
\ee
uniformly with respect to $\xi$,
from which we obtain by \eqref{Gdiff}, \eqref{edef}, and (\ref{iso})
the bound
\be\label{triv}
\Big|\int_{-\infty}^{+\infty} \tilde G^I(x,t;y)f(y)dy\Big|_{L^2(x)}\le
C\|f\|_{L^2(x)},
\ee
for all $t\ge 0$, yielding together with \eqref{SIIest}
the result \eqref{sheatbds} for $p=2$, $r=1$.
Similarly, by boundedness of $\tilde v_j$, $v_j$, $\bar u'$
in all $L^p[0,X]$, we have
$$
\begin{aligned}
|e^{\lambda_j(\xi)t}
\beta_{j,n} \tilde \beta_{j,l}
\Big(v_n(\xi,x_1)-\bar u'(x_1)\Big)
\langle \tilde v_l, \hat f\rangle |_{L^\infty(x_1)}
& \le Ce^{-\theta |\xi|^2t} |\hat f(\xi,\cdot)|_{L^2(x_1)},\\
|e^{\lambda_j(\xi)t}
\beta_{j,k} \tilde \beta_{j,l}v_k(\xi,x_1)
\langle \tilde v_l, \hat f\rangle |_{L^\infty(x_1)}
& \le Ce^{-\theta |\xi|^2t} |\hat f(\xi,\cdot)|_{L^2(x_1)},
 \; \hbox{\rm for }\; k\ne n,
\end{aligned}
$$
$C,\, \theta>0$, yielding by definitions \eqref{Gdiff}, \eqref{edef} the bound
\ba\label{last}
\Big|\int_{-\infty}^{+\infty} \tilde G^I(x,t;y)f(y)dy\Big|_{L^\infty(x)}
&\le
\Big(\frac{1}{2\pi }\Big)^d \int_{-\pi}^{\pi}\int_{\R^{d-1}}
C\phi(\xi) e^{-\theta |\xi|^2t}|\hat f(\xi,\cdot)|_{L^2(x_1)}
d\xi_1\, d\tilde \xi \\
&\le C|\phi(\xi) e^{-\theta |\xi|^2t}|_{L^2(\xi)} |\hat f|_{L^2(\xi, x_1)}\\
&= C(1+t)^{-\frac{d}{4} } \|f\|_{L^2([0,X])},
\ea
hence giving the result for $p=\infty$, $r=0$.
The result for $r=0$ and general $2\le p\le \infty$ then follows by
$L^p$ interpolation between $p=2$ and $p=\infty$.
Derivative bounds $1\le r\le 2$ follow by
similar arguments, using \eqref{crucial3}--\eqref{crucial4}.
Bounds \eqref{etbds} follow similarly.

({\it Case $1\le q \le 2$}).
By Riesz--Thorin interpolation between the cases $q=1$ and $q=2$,
we obtain the bounds asserted in the general case $1\le q\le 2$,
$2\le p\le \infty$.
\end{proof}

\br\label{lowp}
\textup{
The bounds on $\tilde G$, $e_t$, $e_x$
may be recognized as the
standard diffusive bounds satisfied for the heat equation \cite{Z7}.
For dimension $d=1$, it may be shown using pointwise techniques as
in \cite{OZ2} that the bounds of Corollary
\ref{greenbds} extend to all $1\le q\le p\le \infty$.
}
\er

We note a striking analogy between the Green function decomposition of
Corollary \ref{greenbds} and that of \cite{MaZ3,Z4} in the viscous shock case;
compare Proposition 3.3, \cite{Z7}.

\section{Nonlinear stability in dimension one}\label{s:nonlin}
With the bounds of Corollary \eqref{greenbds}, nonlinear
stability follows by exactly the same argument as in \cite{JZ3},
included here for completeness.
We carry out the nonlinear stability analysis
only in the most difficult, one-dimensional, case.
The extension to the multi-dimensional case is straightforward \cite{JZ3,OZ4}.
(Recall that the nonlinear iteration is easier to close in multi-dimensions,
since the linearized behavior is faster decaying \cite{OZ4,JZ3,S1,S2,S3}.)

Hereafter, take $x\in \RR^1$, dropping the indices
on $f^j$ and $x_j$ and writing $u_t+f(u)_x=u_{xx}$.

\subsection{Nonlinear perturbation equations}\label{s:pert}

Given a solution $\tilde u(x,t)$ of \eqref{eqn:1conslaw},
define the nonlinear perturbation variable
\be\label{pertvar}
v=u-\bar u=
\tilde {u}(x+\psi(x,t))-\bar u(x),
\ee
where
\be\label{uvar}
u(x,t):=\tilde {u}(x+\psi(x,t))
\ee
and $\psi:\RM\times\RM\to\RM$ is to be chosen later.

\begin{lem}\label{4.1}
For $v$, $u$ as in \eqref{pertvar},\eqref{uvar},
\begin{equation}\label{eqn:1nlper}
u_t+f(u)_{x}-u_{xx}=\left(\partial_t-L\right)\bar{u}'(x_1)\psi(x,t)
+\partial_x R + (\partial_t+\partial_x^2)  S ,
\ee
where
\[
R:= v\psi_t + v\psi_{xx}+  (\bar u_x +v_x)\frac{\psi_x^2}{1+\psi_x}
= O(|v|(|\psi_t|+|\psi_{xx}|) +\Big(\frac{|\bar u_x|+|v_x|}{1-|\psi_x|} \Big)|\psi_x|^2)
\]
and
\[
S:=- v\psi_x =O(|v|(|\psi_x|).
\]
\end{lem}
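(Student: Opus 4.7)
The identity is algebraic, to be verified by direct computation via the chain rule and the profile relation. Setting $y := x+\psi(x,t)$, I would first expand $u_t$, $u_x$, $u_{xx}$, and $f(u)_x$ through differentiation of $u(x,t)=\tilde u(y,t)$, then substitute $\tilde u_t = \tilde u_{yy} - df(\tilde u)\tilde u_y$ coming from the fact that $\tilde u$ solves \eqref{eqn:1conslaw}. After cancellation this yields
\[
u_t + f(u)_x - u_{xx} \;=\; \tilde u_y(\psi_t - \psi_{xx}) + df(u)\,\tilde u_y\,\psi_x - \tilde u_{yy}\,(2\psi_x + \psi_x^2),
\]
with $\tilde u_y$, $\tilde u_{yy}$, $df(u)$ all evaluated at $(y,t)$.

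Next, I would compute the right-hand side of the lemma. The key auxiliary identity is $L(\bar u'\psi) = (\bar u'\psi_x)_x$, which follows from $L\bar u' = 0$ (translational kernel of $L$) together with the profile relation $\bar u'' = A^1\bar u'$, whose derivative gives $(A^1\bar u')_x = \bar u'''$ so that the $\bar u'''\psi$ contributions in $(\bar u'\psi)_{xx}$ and $(A^1\bar u'\psi)_x$ cancel. Thus $(\partial_t-L)\bar u'\psi = \bar u'\psi_t - \bar u''\psi_x - \bar u'\psi_{xx}$, and a leading-order sanity check (setting $v=0$ and keeping only first-order terms in $\psi$, noting $A^1\bar u' = \bar u''$) already shows that this matches the expansion above.

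To close the identity I would then substitute $u = \bar u + v$ and convert $\tilde u_y = u_x/(1+\psi_x)$, $\tilde u_{yy} = [u_{xx} - u_x\psi_{xx}/(1+\psi_x)]/(1+\psi_x)^2$, and regroup. The organizing observation is that the mixed $v$-and-$\psi$-derivative terms bundle as $(\partial_t+\partial_x^2)(-v\psi_x) = (\partial_t+\partial_x^2)S$, while the $\psi_x^2$-type contributions coming from the geometric expansion of $(1+\psi_x)^{-k}$ together with the residual convection collect into $\partial_x R$ with the stated $R=v\psi_t+v\psi_{xx}+(\bar u_x+v_x)\psi_x^2/(1+\psi_x)$. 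The pointwise bounds on $R$ and $S$ are then immediate from the formulas, together with the elementary estimate $1/|1+\psi_x|\le 1/(1-|\psi_x|)$ for $|\psi_x|<1$.

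The main obstacle is not conceptual but clerical: one must verify term-by-term that every contribution from the chain-rule expansion ends up in exactly one of $(\partial_t-L)\bar u'\psi$, $\partial_x R$, or $(\partial_t+\partial_x^2)S$, with no omissions and no double-counting. The deeper structural reason for seeking this particular decomposition is that both $\partial_x R$ and $(\partial_t+\partial_x^2)S$ will yield favorable decay when iterated against the Green kernel of Corollary \ref{greenbds}: the outer $\partial_x$ transfers onto $\tilde G$ by integration by parts in $y$, while the combination $\partial_t+\partial_x^2$ differs from $L$ only by the benign first-order convection $\partial_x(A^1\,\cdot)$, and so can be traded, via integration by parts in $s$, for $L$-derivatives on $\tilde G$ plus a surface term, producing integrals with one extra power of time decay. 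This is what selects this particular algebraic arrangement as the right one for driving the nonlinear iteration of Section \ref{s:nonlin}.
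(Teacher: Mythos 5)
Your plan is the same as the paper's: chain-rule expansion of $u=\tilde u(x+\psi,t)$, elimination of $\tilde u_t$ via the PDE, use of $L\bar u'=0$ and $\bar u''=A^1\bar u'$ to compute $(\partial_t-L)\bar u'\psi$, then regroup. The intermediate identities you quote ($L(\bar u'\psi)=(\bar u'\psi_x)_x$, $\tilde u_y=u_x/(1+\psi_x)$, the expanded form with $df(u)\tilde u_y\psi_x-\tilde u_{yy}(2\psi_x+\psi_x^2)$) are all correct, as is your sanity check. However, you stop at the point where the actual work is — the term-by-term regrouping — and the route you sketch (expanding $\tilde u_{yy}$ and the $(1+\psi_x)^{-k}$ factors) is appreciably messier than the one the paper uses. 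The paper instead keeps the intermediate expression in the form $\tilde u_x\psi_t - \tilde u_t\psi_x - (\tilde u_x\psi_x)_x$, which is structurally parallel to $(\partial_t-L)\bar u'\psi = \bar u_x\psi_t - \bar u_t\psi_x - (\bar u_x\psi_x)_x$; subtracting and substituting
\[
\tilde u_x-\bar u_x-v_x=-(\bar u_x+v_x)\frac{\psi_x}{1+\psi_x},\qquad \tilde u_t-\bar u_t-v_t=-(\bar u_x+v_x)\frac{\psi_t}{1+\psi_x}
\]
makes the $(\bar u_x+v_x)\psi_t\psi_x/(1+\psi_x)$ contributions cancel outright, leaving
\[
v_x\psi_t-v_t\psi_x-(v_x\psi_x)_x+\Bigl((\bar u_x+v_x)\tfrac{\psi_x^2}{1+\psi_x}\Bigr)_x,
\]
which falls into $\partial_xR+(\partial_t+\partial_x^2)S$ via $v_x\psi_t-v_t\psi_x=(v\psi_t)_x-(v\psi_x)_t$ and $(v_x\psi_x)_x=(v\psi_x)_{xx}-(v\psi_{xx})_x$. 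Since the regrouping is the whole content of the lemma, you should actually carry it out rather than flag it as clerical; and if you do, the paper's arrangement will save you considerable bookkeeping over the $\tilde u_{yy}$-expansion route you describe.
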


\begin{proof}
To begin, notice from the definition of $u$ in \eqref{uvar} we have by a
straightforward computation
\begin{align*}
u_t(x,t)&=\tilde{u}_x(x+\psi(x,t),t)\psi_t(x,t)+\tilde{u}_t(x+\psi,t)\\
f(u(x,t))_x&=df(\tilde{u}(x+\psi(x,t),t))\tilde{u}_x(x+\psi,t)\cdot(1+\psi_x(x,t))
\end{align*}
and
\begin{align*}
u_{xx}(x,t)&=\left(\tilde{u}_x(x+\psi(x,t),t)\cdot(1+\psi_x(x,t))\right)_x\\
&=\tilde{u}_{xx}(x+\psi(x,t),t)\cdot(1+\psi_x(x,t))+\left(\tilde{u}_x(x+\psi(x,t),t)\cdot\psi_x(x,t)\right)_x.
\end{align*}
Using the fact that
$\tilde u_t + df(\tilde{u})\tilde{u}_x-\tilde{u}_{xx}=0$, it follows that
\ba\label{altform}
u_t+f(u)_{x}-u_{xx}&=\tilde{u}_x\psi_t+df(\tilde{u})\tilde{u}_x\psi_x-\tilde{u}_{xx}\psi_x-\left(\tilde{u}_x\psi_x\right)_x\\
&= \tilde u_x \psi_t
-\tilde u_{t} \psi_x - (\tilde u_x \psi_x)_x
\ea
where it is understood that derivatives of $\tilde u$ appearing
on the righthand side
are evaluated at $(x+\psi(x,t),t)$.
Moreover, by another direct calculation,
using the fact that $L(\bar{u}'(x))=0$ by translation invariance,
we have
\begin{align*}
\left(\partial_t-L\right)\bar{u}'(x)\psi&=\bar{u}_x\psi_t
-\bar{u}_t\psi_{x} -(\bar{u}_x\psi_{x})_{x}.
\end{align*}
Subtracting, and using the facts that,
by differentiation of $(\bar u+ v)(x,t)= \tilde u(x+\psi,t)$,
\ba\label{keyderivs}
\bar u_x + v_x&= \tilde u_x(1+\psi_x),\\
\bar u_t + v_t&= \tilde u_t + \tilde u_x\psi_t,\\
\ea
so that
\ba\label{solvedderivs}
\tilde u_x-\bar u_x -v_x&=
-(\bar u_x+v_x) \frac{\psi_x}{1+\psi_x},\\
\tilde u_t-\bar u_t -v_t&=
-(\bar u_x+v_x) \frac{\psi_t}{1+\psi_x},\\
\ea
we obtain
\begin{align*}
u_t+ f(u)_{x} - u_{xx}&=
(\partial_t-L)\bar{u}'(x)\psi
+v_x\psi_t
- v_t \psi_x - (v_x\psi_x)_x
+ \Big((\bar u_x +v_x)\frac{\psi_x^2}{1+\psi_x} \Big)_x,
\end{align*}
yielding \eqref{eqn:1nlper} by
$v_x\psi_t - v_t \psi_x = (v\psi_t)_x-(v\psi_x)_t$
and
$(v_x\psi_x)_x= (v\psi_x)_{xx} - (v\psi_{xx})_{x} $.
\end{proof}

\begin{cor}
The nonlinear residual $v$ defined in \eqref{pertvar} satisfies
\be\label{veq}
v_t-Lv=\left(\partial_t-L\right)\bar{u}'(x_1)\psi
-Q_{x}+ R_x +(\partial_t+\partial_x^2)S,
\ee
where
\be\label{eqn:Q}
Q:=f(\tilde{u}(x+\psi(x,t),t))-f(\bar{u}(x))-df(\bar{u}(x))v=\mathcal{O}(|v|^2),
\ee
\be\label{eqn:R}
R:= v\psi_t + v\psi_{xx}+  (\bar u_x +v_x)\frac{\psi_x^2}{1+\psi_x},
\ee
and
\be\label{eqn:S}
S:= -v\psi_x =O(|v|(|\psi_x|).
\ee
\end{cor}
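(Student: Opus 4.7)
The plan is to derive the equation for $v = u - \bar u$ by directly subtracting $\bar u_t - L\bar u = 0$ (which follows from the fact that $\bar u$ is a stationary solution, i.e., $\bar u_{xx} = f(\bar u)_x$) from the identity established in Lemma 4.1 for $u_t + f(u)_x - u_{xx}$, and simply rearranging with the help of the quadratic remainder $Q$.

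First, I would compute $v_t - Lv$ in terms of $u_t + f(u)_x - u_{xx}$. Writing $u = \bar u + v$ and Taylor-expanding the flux using the definition \eqref{eqn:Q} gives
\begin{equation*}
f(u) = f(\bar u) + df(\bar u)v + Q,
\end{equation*}
so that, using $\bar u_{xx} = f(\bar u)_x$ and $\bar u_t = 0$,
\begin{equation*}
u_t + f(u)_x - u_{xx} = v_t + (df(\bar u)v)_x + Q_x - v_{xx} = v_t - Lv + Q_x,
\end{equation*}
where the last equality uses the definition $Lv = v_{xx} - (df(\bar u)v)_x$ from \eqref{e:lin}. Consequently,
\begin{equation*}
v_t - Lv = \bigl(u_t + f(u)_x - u_{xx}\bigr) - Q_x.
\end{equation*}

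Second, I substitute Lemma 4.1, which expresses $u_t + f(u)_x - u_{xx}$ as $(\partial_t - L)\bar u'(x)\psi + \partial_x R + (\partial_t + \partial_x^2)S$ with $R,S$ exactly the quantities in \eqref{eqn:R}, \eqref{eqn:S}. Combining this with the previous display immediately yields the stated identity \eqref{veq}. The claim $Q = \mathcal{O}(|v|^2)$ is just the standard Taylor-remainder estimate for the $C^{K+1}$ flux $f$, while the asserted size of $S$ is read off directly from its definition.

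No substantive obstacle arises: this is essentially a bookkeeping corollary of Lemma 4.1. The only point worth care is making sure the Taylor expansion is performed about $\bar u(x)$ (not $\tilde u$), so that the linear part of $f(u)-f(\bar u)$ contributes exactly the term $(df(\bar u)v)_x$ needed to reconstruct $Lv$, with everything else absorbed into $Q_x$.
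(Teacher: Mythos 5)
Your proof is correct and matches the paper's own (terse) argument, which reads ``Taylor expansion comparing \eqref{eqn:1nlper} and $\bar u_t + f(\bar u)_x-\bar u_{xx}=0$.'' You have simply spelled out the bookkeeping: expand $f(u)=f(\bar u)+df(\bar u)v+Q$, subtract the profile equation to get $v_t-Lv=(u_t+f(u)_x-u_{xx})-Q_x$, and then substitute Lemma 4.1.
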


\begin{proof}
Taylor expansion comparing \eqref{eqn:1nlper} and
$\bar u_t + f(\bar u)_x-\bar u_{xx}=0$.
\end{proof}

\subsection{Cancellation estimate}\label{s:cancellation}

Our strategy in writing \eqref{veq} is motivated by the following
basic cancellation principle.

\begin{prop}[\cite{HoZ}]\label{p:cancellation}
For any $f(y,s)\in L^p \cap C^2$ with $f(y,0)\equiv 0$, there holds
\be\label{e:cancel}
\int^t_0 \int G(x,t-s;y) (\partial_s - L_y)f(y,s) dy\,ds
= f(x,t).
\ee
\end{prop}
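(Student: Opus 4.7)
The plan is to recognize the left-hand side of \eqref{e:cancel} as a standard Duhamel integral for the semigroup $e^{Lt}$ with kernel $G(x,t;y)$, and to collapse it to a boundary term via a one-line $s$-derivative computation. Since $\int G(x,t-s;y) g(y)\, dy = (e^{L(t-s)} g)(x)$, applied with $g(\cdot) = (\partial_s - L_y) f(\cdot, s)$, the left-hand side of \eqref{e:cancel} equals
\[
\int_0^t e^{L(t-s)}\bigl[(\partial_s - L) f(\cdot, s)\bigr](x)\, ds.
\]

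Next, I would observe that $L$ commutes with $e^{Lt}$ on sufficiently regular functions, giving the exact-derivative identity
\[
\frac{d}{ds}\bigl[e^{L(t-s)} f(\cdot, s)\bigr]
= -L e^{L(t-s)} f(\cdot, s) + e^{L(t-s)} \partial_s f(\cdot, s)
= e^{L(t-s)}(\partial_s - L) f(\cdot, s).
\]
Thus the integrand is a total $s$-derivative, and the fundamental theorem of calculus together with $f(\cdot, 0) \equiv 0$ and $e^{L\cdot 0} = \Id$ collapses the integral to
\[
\bigl[e^{L(t-s)} f(\cdot, s)\bigr]_{s=0}^{s=t}(x)
= f(x,t) - e^{Lt}\bigl[f(\cdot, 0)\bigr](x) = f(x,t),
\]
which is \eqref{e:cancel}.

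At the kernel level, the same computation reads: integration by parts in $s$ on the $G\partial_s f$ piece, using $G(x,0;y) = \delta(x-y)$ to produce $f(x,t)$ at the $s=t$ endpoint and the hypothesis $f(\cdot,0) \equiv 0$ to kill the $s=0$ endpoint; integration by parts in $y$ on the $G L_y f$ piece to transfer $L_y$ onto $G$ as $L_y^*$; and the duality identity $\partial_\tau G(x,\tau;y) = L_y^* G(x,\tau;y)$ (dual to $\partial_\tau G = L_x G$) to cancel the two resulting interior terms. The only step that really needs care is justifying the interchange of differentiation and integration, and the vanishing of spatial boundary contributions at $|y| = \infty$; both follow in routine fashion from the decay of $G$ and its $y$-derivatives established in Corollary \ref{greenbds} together with the $L^p \cap C^2$ hypothesis on $f$. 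I do not expect any genuine obstacle from the periodic-coefficient structure of $L$, since the required duality $\partial_\tau G = L_y^* G$ is simply the dual of $\partial_\tau G = L_x G$ and follows from $\langle e^{L\tau} g, h\rangle = \langle g, e^{L^* \tau} h\rangle$.
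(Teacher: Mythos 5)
Your proposal is correct and takes essentially the same approach as the paper: the paper's proof is exactly the kernel-level integration by parts in $s$ (and $y$) together with the duality relation $(\partial_t - L_y)^* G(x,t-s;y) = \delta(x-y)\delta(t-s)$ and the hypotheses $G(x,0;y)=\delta(x-y)$, $f(\cdot,0)\equiv 0$, which is your second computation. Your first, semigroup-level version (recognizing the integrand as $\tfrac{d}{ds}\bigl[e^{L(t-s)}f(\cdot,s)\bigr]$) is a clean equivalent restatement of the same idea.
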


\begin{proof} Integrating the left hand side by parts, we obtain
\be
\int G(x,0;y)f(y,t)dy - \int G(x,t;y)f(y,0)dy
+ \int^t_0 \int
(\partial_t - L_y)^*G(x,t-s;y) f(y,s)dy\, ds.
\label{5.53.2}
\ee
Noting that, by duality,
$$
(\partial_t - L_y)^* G(x,t-s;y) = \delta(x-y) \delta(t-s),
$$
$\delta(\cdot)$ here denoting the Dirac delta-distribution,
we find that the third term on the righthand side
vanishes in \eqref{5.53.2}, while,
because $G(x,0;y) = \delta(x-y)$, the first term is simply $f(x,t)$.
The second term vanishes by $f(y,0)\equiv 0$.
\end{proof}

\subsection{Nonlinear damping estimate}

\begin{proposition}\label{damping}
Let $v_0\in H^K$ ($K$ as in (H1)), and suppose that
for $0\le t\le T$, the $H^K$ norm of $v$
and the $H^K(x,t)$ norms of $\psi_t$ and $\psi_x$
remain bounded by a sufficiently small constant.
There are then constants $\theta_{1,2}>0$ so that, for all $0\leq t\leq T$,
\begin{equation}\label{Ebounds}
|v(t)|_{H^K}^2 \leq C e^{-\theta_1 t} |v(0)|^2_{H^K} +
C \int_0^t e^{-\theta_2(t-s)} \left(|v|_{L^2}^2 +
|(\psi_t, \psi_x)|_{H^K(x,t)}^2 \right) (s)\,ds.
\end{equation}
\end{proposition}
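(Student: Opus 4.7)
The approach is a standard Kawashima--Moser energy estimate on the perturbation equation \eqref{veq}, exploiting the parabolic dissipation built into $L=\partial_x^2-(A(x)\cdot)_x$ together with the smallness hypothesis on $v$ and on $(\psi_t,\psi_x)$. As a preliminary, I would rewrite the most delicate forcing term in a form that manifestly involves only $\psi_t$ and $\psi_x$ (never $\psi$ itself). Using $L\bar u'=0$ (translation invariance), a direct computation gives
\[
(\partial_t-L)(\bar u'(x)\psi) = \bar u'\psi_t - \bar u'\psi_{xx} - 2\bar u''\psi_x + A(x)\bar u'\psi_x,
\]
so that after any number of spatial derivatives, this contribution is pointwise controlled by derivatives of $\psi_t$ and $\psi_x$ with coefficients smooth and bounded in $\bar u$. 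The remaining forcing $-Q_x+R_x+(\partial_t+\partial_x^2)S$ arising from \eqref{eqn:Q}--\eqref{eqn:S} is at least quadratic in the small quantities $v,\,v_x,\,\psi_t,\,\psi_x,\,\psi_{xx}$.

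To handle the genuinely awkward time derivative $\partial_t S=-v_t\psi_x-v\psi_{xt}$ in the forcing, I would introduce the effective unknown $w:=v-S=v(1+\psi_x)$. Using \eqref{veq} one checks that $w$ satisfies
\[
w_t - Lw = (\partial_t-L)(\bar u'\psi) - Q_x + R_x - (AS)_x,
\]
in which the entire right-hand side is now purely spatial, and moreover $|w|_{H^K}\sim|v|_{H^K}$ under the smallness hypothesis on $\psi_x$ by Moser's inequality. I would then carry out the energy estimate on $w$: for $0\le k\le K$, apply $\partial_x^k$ to the above equation and take the $L^2(x)$ inner product with $\partial_x^k w$. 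The principal part of $L$ contributes $-|\partial_x^{k+1}w|_{L^2}^2$; the commutators $[\partial_x^k,(A\,\cdot\,)_x]$ and the quasilinear term $Q_x$ are moved back onto $\partial_x^k w$ by integration by parts and absorbed into $\varepsilon|\partial_x^{k+1}w|_{L^2}^2+C_\varepsilon|w|_{H^k}^2$ using Moser-type product estimates together with smallness of $|v|_{H^K}$. Each factor in the remaining forcing is similarly dispatched by Leibniz: derivatives falling on $v$ contribute to $|v|_{H^K}^2$, derivatives falling on $\psi$-terms contribute to $|(\psi_t,\psi_x)|_{H^K(x,t)}^2$.

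Summing the resulting inequalities over $0\le k\le K$ and choosing $\varepsilon$ sufficiently small produces
\[
\tfrac{d}{dt}|w|_{H^K}^2 + \theta\,|\partial_x^{K+1}w|_{L^2}^2 \le C\bigl(|w|_{L^2}^2+|(\psi_t,\psi_x)|_{H^K(x,t)}^2\bigr),
\]
and the Gagliardo--Nirenberg interpolation $|w|_{H^K}^2\le\eta\,|\partial_x^{K+1}w|_{L^2}^2+C_\eta|w|_{L^2}^2$ upgrades this to
\[
\tfrac{d}{dt}|w|_{H^K}^2+\theta_2\,|w|_{H^K}^2 \le C\bigl(|w|_{L^2}^2+|(\psi_t,\psi_x)|_{H^K(x,t)}^2\bigr),
\]
from which the claimed bound on $|w|_{H^K}\sim|v|_{H^K}$ follows by Gr\"onwall's inequality. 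The main obstacle is the presence of $\partial_t S$ in the forcing, which reflects the quasilinear nature of the modulated Ansatz \eqref{pertvar} and which cannot simply be integrated by parts against $\partial_x^k v$; once the change of unknown $w=v(1+\psi_x)$ eliminates it from the forcing, the remaining bookkeeping is routine and parallels the one-dimensional analysis in \cite{JZ3}.
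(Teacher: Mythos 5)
Your proof plan is closely parallel to the paper's and is in essence a correct route, but there are two points worth flagging.

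\emph{Comparison of approach.} The central technical hurdle is exactly the one you identify: the quasilinear term in which $\psi_x$ multiplies a time derivative of $v$. The paper handles it by rewriting the perturbation equation (going back to \eqref{altform}--\eqref{vperturteq}) in the form $(1+\psi_x)v_t - v_{xx} = (\hbox{forcing with no }v_t)$, and then taking the $L^2$ inner product against the \emph{weighted} test functions $\sum_{j}\partial_x^{2j}v/(1+\psi_x)$, so the $(1+\psi_x)$ prefactor cancels and the energy is the plain $|v|_{H^K}^2$. Your change of unknown $w=v(1+\psi_x)=v-S$, followed by an unweighted energy estimate in $w$, is a dual formulation of the same maneuver, and under the smallness hypothesis on $\psi_x$ the norms $|w|_{H^K}$ and $|v|_{H^K}$ are equivalent. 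Both routes are viable; the paper's version never explicitly reassembles $w$, while yours is perhaps conceptually cleaner in that the $v_t$-coefficient is normalized before the estimate starts.

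\emph{Two gaps in the write-up.} First, a minor algebra slip: with $w=v-S$ and $v_t-Lv=(\partial_t-L)(\bar u'\psi)-Q_x+R_x+(\partial_t+\partial_x^2)S$, one finds
\[
w_t-Lw = (\partial_t-L)(\bar u'\psi)-Q_x+R_x+2S_{xx}-(AS)_x,
\]
so the term $2S_{xx}$ is missing from your displayed equation for $w$; since $S_{xx}$ is purely spatial this does not undermine your claim that the right side is free of time derivatives, but it should be kept. Second, and more substantively: by carrying out the energy estimate on the form \eqref{veq}, you inherit the forcing $R_x$ where $R$ contains $v\psi_{xx}$ (a rewriting designed for the Duhamel/Green-function argument, not for energy estimates). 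Applying $\partial_x^K$ and one integration by parts then produces terms involving $\partial_x^K\psi_{xx}=\partial_x^{K+1}\psi_x$, whereas the proposition's hypotheses control only $\psi_x$ in $H^K(x,t)$. The paper sidesteps this by working instead with \eqref{vperturteq2}, where the $\psi$-dependent forcing sits under an outer $\partial_x$ with at most $\psi_x$ appearing \emph{inside}; one integration by parts then sends the outer derivative to the test function, and the remaining derivatives can be distributed so that only $\partial_x^{K}\psi_x$ and $\partial_x^{K+1}v$ are ever required. Your scheme can be repaired in the same spirit (e.g. by undoing the rewrite $v\psi_{xx}=(v\psi_x)_x-v_x\psi_x$ inside $R$ before differentiating), but as written the "routine Leibniz bookkeeping" glosses over a genuine loss of one derivative in $\psi_x$.
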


\begin{proof}
Subtracting from the equation \eqref{altform} for $u$
the equation for $\bar u$, we may write the
nonlinear perturbation equation as
\ba\label{vperturteq}
v_t + (df(\bar u)v)_x-v_{xx}= Q(v)_x
+ \tilde u_x \psi_t -\tilde u_{t} \psi_x - (\tilde u_x \psi_x)_x,
\ea
where it is understood that derivatives of $\tilde u$ appearing
on the righthand side
are evaluated at $(x+\psi(x,t),t)$.
Using \eqref{solvedderivs} to replace $\tilde u_x$ and
$\tilde u_t$ respectively by
$\bar u_x + v_x -(\bar u_x+v_x) \frac{\psi_x}{1+\psi_x}$
and
$\bar u_t + v_t -(\bar u_x+v_x) \frac{\psi_t}{1+\psi_x}$,
and moving the resulting $v_t\psi_x$ term to the lefthand side
of \eqref{vperturteq}, we obtain
\ba\label{vperturteq2}
(1+\psi_x) v_t -v_{xx}&=
-(df(\bar u)v)_x+ Q(v)_x
+ \bar u_x \psi_t
\\ &\quad
- ((\bar u_x+v_x)  \psi_x)_x
+ \Big((\bar u_x+v_x) \frac{\psi_x^2}{1+\psi_x}\Big)_x.
\ea

Taking the $L^2$ inner product in $x$ of
$\sum_{j=0}^K \frac{\partial_x^{2j}v}{1+\psi_x}$
against (\ref{vperturteq2}), integrating by parts,
and rearranging the resulting terms,
we arrive at the inequality
\[
\partial_t |v|_{H^K}^2(t) \leq -\theta |\partial_x^{K+1} v|_{L^2}^2 +
C\left( |v|_{H^K}^2
+
|(\psi_t, \psi_x)|_{H^K(x,t)}^2 \right)
,
\]
for some $\theta>0$, $C>0$, so long as $|\tilde u|_{H^K}$ remains bounded,
and $|v|_{H^K}$ and $|(\psi_t,\psi_x)|_{H^K(x,t)}$ remain sufficiently small.
Using the Sobolev interpolation
$
|v|_{H^K}^2 \leq  |\partial_x^{K+1} v|_{L^2}^2 + \tilde{C} | v|_{L^2}^2
$
for $\tilde{C}>0$ sufficiently large, we obtain
$
\partial_t |v|_{H^K}^2(t) \leq -\tilde{\theta} |v|_{H^K}^2 +
C\left( |v|_{L^2}^2+ |(\psi_t, \psi_x)|_{H^K(x,t)}^2 \right)
$
from which (\ref{Ebounds}) follows by Gronwall's inequality.
\end{proof}

\subsection{Integral representation/$\psi$-evolution scheme}

By Proposition \ref{p:cancellation},
we have, applying Duhamel's principle to \eqref{veq},
\ba\label{prelim}
  v(x,t)&=\int^\infty_{-\infty}G(x,t;y)v_0(y)\,dy  \\
  &\quad
  + \int^t_0 \int^\infty_{-\infty} G(x,t-s;y)
  (-Q_y+ R_x + S_t + S_{yy} ) (y,s)\,dy\,ds
+ \psi (t) \bar u'(x).
\ea
Defining $\psi$ implicitly as
\ba
  \psi (x,t)& =-\int^\infty_{-\infty}e(x,t;y) u_0(y)\,dy \\
&\quad
  -\int^t_0\int^{+\infty}_{-\infty} e(x,t-s;y)
  (-Q_y+ R_x + S_t + S_{yy} ) (y,s)\,dy\,ds ,
 \label{psi}
\ea
following \cite{ZH,Z4,MaZ2,MaZ3},
where $e$ is defined as in \eqref{E},
and substituting in \eqref{prelim} the decomposition $G=\bar u'(x)e +  \tilde G$ of Corollary \ref{greenbds},
we obtain the {\it integral representation}
\ba \label{u}
  v(x,t)&=\int^\infty_{-\infty} \tilde G(x,t;y)v_0(y)\,dy \\
&\quad
  +\int^t_0\int^\infty_{-\infty}\tilde G(x,t-s;y)
  (-Q_y+ R_x + S_t + S_{yy} ) (y,s)\,dy\,ds ,
\ea
and, differentiating (\ref{psi}) with respect to $t$,
and recalling that
$e(x,s;y)\equiv 0$ for $s \le 1 $,
\ba \label{psidot}
   \partial_t^j\partial_x^k \psi (x,t)&=-\int^\infty_{-\infty}\partial_t^j\partial_x^k
e(x,t;y) u_0(y)\,dy \\
&\quad
  -\int^t_0\int^{+\infty}_{-\infty} \partial_t^j\partial_x^k
e(x,t-s;y)
  (-Q_y+ R_x + S_t + S_{yy} ) (y,s)\,dy\,ds .
  \ea

Equations \eqref{u}, \eqref{psidot}
together form a complete system in the variables $(v,\partial_t^j \psi,
\partial_x^k\psi)$,
$0\le j\le 1$, $0\le k\le K$,
from the solution of which we may afterward recover the
shift $\psi$ via \eqref{psi}.
From the original differential equation \eqref{veq}
together with \eqref{psidot},
we readily obtain short-time existence and continuity with
respect to $t$ of solutions
$(v,\psi_t, \psi_x)\in H^K$
by a standard contraction-mapping argument based on \eqref{Ebounds},
\eqref{psi}, and and \eqref{etbds}.

%

\subsection{Nonlinear iteration}

Associated with the solution $(u, \psi_t, \psi_x)$ of integral system
\eqref{u}--\eqref{psidot}, define
\ba\label{szeta}
\zeta(t)&:=\sup_{0\le s\le t}
 |(v, \psi_t,\psi_x)|_{H^K}(s)(1+s)^{1/4} .
\ea

\bl\label{sclaim}
For all $t\ge 0$ for which $\zeta(t)$ is finite, some $C>0$,
and $E_0:=|u_0|_{L^1\cap H^K}$,
\be\label{eq:sclaim}
\zeta(t)\le C(E_0+\zeta(t)^2).
\ee
\el

\begin{proof}
By \eqref{eqn:R}--\eqref{eqn:S} and definition \eqref{szeta},
\ba\label{sNbds}
|(Q,R,S)|_{L^1\cap L^\infty}
&\le |(v,v_x,\psi_t,\psi_x)|_{L^2}^2+
|(v,v_x,\psi_t,\psi_x)|_{L^\infty}^2
\le C\zeta(t)^2 (1+t)^{-\frac{1}{2}},\\
\ea
so long as $|\psi_x|\le |\psi_x|_{H^K}\le \zeta(t)$ remains small,
and likewise (using the equation to bound $t$ derivatives in terms
of $x$-derivatives of up to two orders)
\ba\label{sNbds2}
|(\partial_t+\partial_x^2)S|_{L^1\cap L^\infty}
&\le |(v,\psi_x)|_{H^2}^2
+ |(v,\psi_x)|_{W^{2,\infty}}^2
\le C\zeta(t)^2 (1+t)^{-\frac{1}{2}}.\\
\ea

Applying Corollary \ref{greenbds} with $q=1$, $d=1$ to representations
\eqref{u}--\eqref{psidot}, we obtain for any $2\le p<\infty$
\ba\label{sest}
|v(\cdot,t)|_{L^p(x)}& \le
C(1+t)^{-\frac{1}{2}(1-1/p)}E_0 \\
&\quad +
C\zeta(t)^2\int_0^{t} (1+t-s)^{-\frac{1}{2}(1/2-1/p)}(t-s)^{-\frac{3}{4}}
(1+s)^{-\frac{1}{2}}ds\\
&
\le
 C(E_0+\zeta(t)^2) (1+t)^{-\frac{1}{2}(1-1/p)}
\ea
and
\ba\label{sestad}
|(\psi_t,\psi_x)(\cdot, t)|_{W^{K,p}}& \le
C(1+t)^{-\frac{1}{2}}E_0 +
C\zeta(t)^2\int_0^{t} (1+t-s)^{-\frac{1}{2}(1-1/p)-1/2}
(1+s)^{-\frac{1}{2}}ds \\
&
\le
 C(E_0+\zeta(t)^2) (1+t)^{-\frac{1}{2}(1-1/p)}.
\ea
Using \eqref{Ebounds} and \eqref{sest}--\eqref{sestad},
we obtain
$|v(\cdot,t)|_{H^K(x)} \le
 C(E_0+\zeta(t)^2) (1+t)^{-\frac{1}{4}}$.
Combining this with \eqref{sestad}, $p=2$, rearranging, and recalling
definition \eqref{szeta}, we obtain \eqref{sclaim}.
\end{proof}

\begin{proof}[Proof of Theorem \ref{main}]
By short-time $H^K$ existence theory,
$\|(v,\psi_t,\psi_x)\|_{H^{K}}$ is continuous so long as it
remains small, hence $\eta$ remains
continuous so long as it remains small.
By \eqref{sclaim}, therefore,
it follows by continuous induction that
$\eta(t) \le 2C \eta_0$ for $t \ge0$, if $\eta_0 < 1/ 4C$,
yielding by (\ref{szeta}) the result (\ref{eq:smallsest}) for $p=2$.
Applying \eqref{sest}--\eqref{sestad}, we obtain
(\ref{eq:smallsest}) for $2\le p\le p_*$ for any $p_*<\infty$,
with uniform constant $C$.
Taking $p_*>4$ and estimating
$$
|Q|_{L^2}, \, |R|_{L^2}, \, |S|_{L^2}(t)
\le |(v,\psi_t,\psi_x)|_{L^4}^2\le CE_0(1+t)^{-\frac{3}{4}}
$$
in place of the weaker \eqref{sNbds},
then applying Corollary \ref{greenbds} with $q=2$, $d=1$,
we obtain finally \eqref{eq:smallsest} for $2\le p\le \infty$,
by a computation similar \eqref{sest}--\eqref{sestad};
we omit the details of this final bootstrap argument.
Estimate \eqref{eq:stab} then follows using \eqref{etbds} with
$q=d=1$, by
\ba\label{sesta}
|\psi(t)|_{L^p}& \le
C E_0 (1+t)^{\frac{1}{2p}}
+
C\zeta(t)^2\int_0^{t} (1+t-s)^{-\frac{1}{2}(1-1/p)}
(1+s)^{-\frac{1}{2}}ds\\
& \le
C(1+t)^{\frac{1}{2p}}(E_0+\zeta(t)^2),
\ea
together with the fact that
$ \tilde u(x,t)-\bar u(x)= v(x-\psi,t)+ (\bar u(x)-\bar u(x-\psi), $
so that $|\tilde u(\cdot, t)-\bar u|$ is controlled
by the sum of $|v|$ and
$|\bar u(x)-\bar u(x-\psi)|\sim |\psi|$.
This yields stability for $|u-\bar u|_{L^1\cap H^K}|_{t=0}$
sufficiently small, as described in the final line of the theorem.
\end{proof}


\end{document}